\newcommand{\vp}{\varphi}
\newcommand{\Ge}{G^\varepsilon}
\newcommand{\ve}{v^\varepsilon}
\newcommand{\ue}{u^\varepsilon}
\newcommand{\be}{\begin{eqnarray}}
\newcommand{\ee}{\end{eqnarray}}
\newcommand{\ben}{\begin{eqnarray*}}
\newcommand{\enn}{\end{eqnarray*}}
\newcommand{\sgn}{{\rm sgn}\,}
\newtheorem{theorem}{\textbf Theorem}[section]
\newtheorem{lemma}{\textbf Lemma}[section]
\newtheorem{rem}{\textbf Remark}[section]
\newtheorem{prop}{\textbf Proposition}[section]
\def\endProof{{\hfill$\Box$}}
\newcounter{remark}
\par \stepcounter{remark} {\it Remark
\arabic{section}.\arabic{remark}.}~}{\rm \endProof\par}
\def\R{\mathbb{R}}
\begin{document}
\begin{titlepage}
\title{\bf Rarefaction waves in nonlocal convection-diffusion equations }
\author{ Anna Pude{\l}ko\\
\\        
\small{AGH University of Science and Technology,}\\
\small{al. Mickiewicza 30, 30-059, Krak\'ow, Poland.}\\
\small{        (pudelko@agh.edu.pl)}}

\date{}
\end{titlepage}
\maketitle

\begin{abstract}
We consider the ``convection-diffussion'' equation 
$u_t=J*u-u-uu_x,$ where $J$ is a probability density. 
We supplement this equation with step-like initial conditions  and prove a convergence of corresponding solution towards 
a rarefaction wave, {\it i.e.}  
a unique entropy solution of the Riemann problem for the nonviscous Burgers equation.
Methods and tools used in this paper are inspired by those used
in [Karch, Miao and Xu, SIAM J. Math. Anal. {\bf 39} (2008), no. 5, 1536--1549.], where the fractal Burgers equation was studied.

\medskip

\noindent {\bf AMS Subject Classification 2000:}\quad 35B40, 35K55, 60J60 
%60J60, 35B40,35K55,35Q53.

\medskip

\noindent {\bf Key words:} asymptotic behaviour of solutions, rarefaction waves, Riemann problem, long range interactions.

\medskip

{\bf
\noindent
\date{\today}}

\end{abstract}
%%%%%%%%%%%%%%%%%%%%%%%%%%%%%%%%%%%%%%
%%%%%%%%%%%%%%%%%%%%%%%%%%%%%%%%%%%%
%%%%%%%%%%%%%%%%%%%%%%%%%%%%%%%%%%%%%%
%%%%%%%%%%%%%%%%%%%%%%%%%%%%%%%%%%%%
\section{Introduction}
\setcounter{equation}{0}

The goal of this work is to study asymptotic properties of solutions
to the Cauchy problem for the following nonlocal convection-diffussion equation 
\begin{equation}
\label{rownanie}
u_t={\cal L} u-uu_x, \qquad x\in \mathbb{R},~t>0,
\end{equation}
where the nonlocal operator ${\cal L}$ is defined by the formula 
\begin{equation}
\label{operator}
{\cal L} u=J*u-u, \quad \text {with} \quad J\in L^1(\mathbb R),~ J\geqslant 0,
\end{equation}
and `` * '' denotes the convolution with respect to the space variable.
We supplement this problem with the step-like initial condition satisfying 
\begin{equation}
\label{warunek}
u(x,0)=u_0(x)\to u_{\pm} \qquad \text{when}\quad x\to\pm \infty  
 \end{equation}
with some constants $u_{-}<u_{+}$. The precise meaning of this condition is given in (\ref{as u01}) and (\ref{as u02}), below.

\vskip 0,5cm
Equation (\ref{rownanie}) with the particular kernel $J(x)=\frac{1}{2}e^{-\vert x\vert}$
can be obtained from the following system modelling a radiating gas \cite{H}
\begin{equation}
\label{numer}
 u_t+uu_x+q_x=0,\quad -q_{xx}+q+u_x=0 \qquad\text{for} \quad x\in \mathbb R,~t\geqslant 0.
\end{equation}
Indeed, the second equation in (\ref{numer}) can be formally solved to obtain $q=-\tilde J u_x,$
with a kernel $J(x)=\frac{1}{2}e^{-\vert x\vert}$ that is the fundamental solution of the operator $-\frac{d^2}{dx^2}+I.$ 
Thus, substituting  $ q_x=-\tilde Ju_{xx}=u-J*u$
into first equation in (\ref{numer}) we obtain an equation which is formally equivalent to (\ref{rownanie})-(\ref{operator}).
%The second-order approximation (obtained by neglecting the derivative $q_{xx}$ in (\ref{numer})) gives the viscous Burgers equation
%$u_t+uu_x=u_{xx}$, while the first-order approximation (obtained by dropping the derivatives $q_x$ and $q_{xx}$ in (\ref{numer})) gives the inviscid Burgers equation $u_t+uu_x=0.$
The derivation of system (\ref{numer}) from the Euler system for a perfect compressible fluid 
coupled with an elliptic equation for the temperature can be found in \cite{KT}.
%For application of this type of equations in hydrodynamics see in \cite{R} and in dislocations dynamics \cite{IR} and references therein.

In this work, we consider more general kernels (see our assumptions (\ref{as J}), below), because
the general integral operator ${\cal L} u=J*u-u$ models long range interactions and appears in many problems ranging 
from micro-magnetism \cite{MGP, MOPT_1, MOPT_2}, neural network \cite{EM}, hydrodynamics \cite{R} to ecology
\cite{CMS, C, DK, KM, M}, and \cite{SSN}. For example, in some population dynamic models, such
an operator is used to model the dispersal of individuals through their environment \cite{F_1, F_2, HMMV}.
We also refer the reader to a series of papers \cite{AB, BFRW, Ch, Co_1, Co_2, CoD, CoDM} on travelling fronts and to \cite{CoDM_2} on pulsating fronts for the equation $u_t=J*u-u+f(x,u).$

The equation in (\ref{rownanie})-(\ref{operator}) with the particular kernel $J(x)=\frac{1}{2}e^{-\vert x\vert}$ (thus in the context of modelling 
radiating gases) with various classes of initial data have been recently intensively studied. 
For existence and uniqueness results, we refer the reader to \cite{KN} and \cite{LM}.
In \cite{CH}, Chmaj gave an answer to an open problem stated by Serre in \cite{S_2} 
concerning existence of travelling wave solutions to equation (\ref{rownanie})-(\ref{operator}) with more general kernel.
Here, we refer the reader to the recent work \cite{ChJ}, for generalizations of those results and for additional references.  

The large time behaviour of solution to equation (\ref{rownanie})-(\ref{operator}) was considered eg. in \cite{KN, S, L, KT}. In the case of initial data $u_0$ satisfying 
$u_0(x)\to u_{\pm}$ when $x\to\pm \infty,$ with $u_->u_+$, Serre \cite{S} showed the $L^1$-stability of shock profiles.  
Asymptotical stability of smooth travelling waves was proved in \cite{KN}.
For initial data $u_0\in L^1(\mathbb R)\cap L^\infty(\mathbb R),$ Lauren\c{c}ot \cite{L} showed the convergence 
of integrable and bounded weak
solutions of (\ref{rownanie})-(\ref{operator}) towards a source-type solution to the viscous Burgers equation.
Here, we recall also recent works \cite{IR, IIS-D}, where a doubled nonlocal 
version of equation (\ref{rownanie}) (namely, where the Burgers flux is replaced by nonlinear term in 
convolution form) was studied together with initial conditions from $L^1(\mathbb R)\cap L^\infty(\mathbb R).$  

The large time behaviour of solutions to problem (\ref{rownanie})-(\ref{warunek}) when  $J(x)=\frac{1}{2}e^{-\vert x\vert}$ and
$u_-<u_+$ was studied by Kawashima and Tanaka \cite{KT}, where a specific structure of this model was used to show
the convergence of solutions towards rarefaction waves, under suitable smallness conditions on initial data.

The goal of this work is to generalize the result from \cite{KT} by considering less regular initial condition with 
no smallness assumption and more general kernel $J.$ 
To deal with such a problem, we develope methods and tools, which are inspired by those used in \cite{KMX} where 
the fractal Burgers equation was studied.  

%%%%%%%%%%%%%%%%%%%%%%%%%%%%%%%%%%%%
%%%%%%%%%%%%%%%%%%%%%%%%%%%%%%%%%%%%%%%%
%%%%%%%%%%%%%%%%%%%%%%%%%%%%%%%%%%%%%%%%%
%%%%%%%%%%%%%%%%%%%%%%%%%%%%%%%%%%%%%%%%%%

\section{Main result}
\setcounter{equation}{0}

First, we recall that the explicit function
\begin{equation}\label{rarefaction}
w^R(x,t)=\left\{
\begin{aligned}
&u_-\,,\quad &&x/t\leq u_-,\\
&x/t\,,\quad &&u_-\leq x/t\leq u_+, \\
& u_+\,,\quad &&x/t\geq u_+,
\end{aligned}
\right.
\end{equation}
is called a rarefaction wave and satisfies the following Riemann problem
\begin{eqnarray*}
&&w^R_t+w^R w^R_x=0,\label{eq-rar}\\
&&w^R(x,0)=w_0^R(x)=\left\{
\begin{aligned}
u_-\,,\quad x<0,\\
u_+\,,\quad x>0.
\end{aligned}\right.\label{ini-rar}
\end{eqnarray*}
in a weak (distributional sense). Moreover, this is the unique entropy solution. Such rarefaction waves appear as
asymptotic profiles when $t\to\infty$ of solutions to the viscous Burgers equation 
\begin{equation*}
\label{burgers}
u_t-u_{xx}+uu_x=0
\end{equation*}
supplemented with an initial datum $u(x,0)=u_0(x)$, satisfying $u_0-u_-\in L^1((-\infty,0))$ and $u_0-u_+\in L^1((0,\infty)),$ 
(cf. \cite{HN, IO} and Lemma \ref{w_R}, below).
Below, we use also the following regularized problem
\begin{eqnarray}
&&w_t-w_{xx}+w w_x=0,\label{eq-app}\\
&&w(x,0)=w_0(x)=
\left\{
\begin{aligned}
u_-\,,\quad x<0,\\
u_+\,,\quad x>0,
\end{aligned}\right.
\label{ini-app}
\end{eqnarray}
which solution is called smooth approximation of the rarefaction wave (\ref{rarefaction}).

The purpose of this paper is to show that weak solutions of the nonlocal Cauchy problem (\ref{rownanie})-(\ref{warunek})
exist for all $t\geqslant 0$ and converge as $t\to\infty$ towards the rarefaction wave.

Here, as usual, a function $u\in L^\infty(\mathbb R\times [0,\infty))$ is called a weak solution 
to problem (\ref{rownanie})-(\ref{warunek}) if for every test function $\varphi\in C^\infty_c(\mathbb R\times [0,\infty))$ we have
\begin{equation*}
\label{ue_slabe_2}
-\int_\mathbb R\int_0^\infty u\vp_t dtdx-\int_\mathbb R u_0(x)\vp(x,0)~dx= 
\int_\mathbb R\int_0^\infty u {\cal L}\vp ~dtdx+\frac{1}{2}\int_\mathbb R\int_0^\infty u^2\vp_x~dtdx.
\end{equation*}

\noindent In the following, we assume that ${\cal L}u=J*u-u$ with
\begin{equation}
\label{as J}
\begin{split}
&J,~\vert x\vert^2 J\in L^1(\mathbb R), \quad  \int_{\mathbb R} J(x) dx=1,\\
&J(x)=J(-x)\quad \text{and}\quad J(x)\geqslant 0\quad \text{for all} \quad x\in\mathbb R.
\end{split}
\end{equation}
Moreover, we consider initial conditions satisfying
\begin{equation}
\label{as u01}
u_0-u_-\in L^1((-\infty,0))\quad \text{and} \quad u_0-u_+\in L^1((0,\infty))                                         
\end{equation}
as well as
\begin{equation}
\label{as u02}
u_{0,x}\in L^1(\mathbb R) \quad \text{and}\quad u_{0,x}(x)\geqslant 0\quad \text{a.e. in} ~~\mathbb R.                                        
\end{equation}

Now, we formulate the main result of this work on the rate of convergence of solutions to problem (\ref{rownanie})-(\ref{warunek})
towards the rarefaction wave (\ref{rarefaction}).

\begin{theorem}\label{th main}
Assume that the kernel $J$ satisfies (\ref{as J}) and the initial datum $u_0$ has properties stated in (\ref{as u01}) and (\ref{as u02}).
Then, there exists a unique weak solution $u=u(x,t)$ of problem (\ref{rownanie})-(\ref{warunek}) 
with the following property:
for every $p\in [1,\infty]$ there is a constant $C>0$ such that
\begin{equation}
\label{glowna nierownosc}
\Vert u(t)-w^R(t)\Vert_p\leqslant C t^{-(1-1/p)/2}[\log(2+t)]^{(1+1/p)/2} 
\end{equation}
for all $t>0.$
\end{theorem}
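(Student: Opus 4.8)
The plan is to produce the solution together with uniform a priori bounds, and then to control $u-w^R$ through the intermediate smooth profile $w$ solving the viscous problem \rf{eq-app}--\rf{ini-app}. First I would construct $u$ by an approximation scheme (regularizing the kernel and/or adding a vanishing viscosity $\eps u_{xx}$) and pass to the limit using the comparison principle for \rf{rownanie}--\rf{operator}; this gives $u_-\leqslant u\leqslant u_+$, and, since $u_{0,x}\geqslant 0$ by \rf{as u02}, it also propagates monotonicity, $u_x(\cdot,t)\geqslant 0$, so that $\Vert u_x(t)\Vert_1=u_+-u_-$ for all $t$. Uniqueness follows from the $L^1$-contraction property of the nonlocal flow. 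For the comparison profile I would invoke Lemma \ref{w_R}: the solution $w$ of \rf{eq-app}--\rf{ini-app} satisfies $u_-\leqslant w\leqslant u_+$, $w_x\geqslant 0$, together with $\Vert w(t)-w^R(t)\Vert_p\leqslant C t^{-(1-1/p)/2}$ and the derivative bounds for $\Vert w_x(t)\Vert_p$ and $\Vert w_{xx}(t)\Vert_p$ used below. In particular $z_x=u_x-w_x$ obeys a uniform bound $\Vert z_x(t)\Vert_1\leqslant \Vert u_x\Vert_1+\Vert w_x\Vert_1=2(u_+-u_-)$, a $BV$-type control that will be essential.

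Next I would derive the equation for $z:=u-w$. Subtracting \rf{eq-app} from \rf{rownanie} and writing $uu_x-ww_x=\tfrac12\,\pt_x\big((u+w)z\big)$ gives
\begin{equation*}
z_t=\mathcal{L}z-\tfrac12\,\pt_x\big((u+w)z\big)+R,\qquad R:=\mathcal{L}w-w_{xx},
\end{equation*}
with $z(\cdot,0)=u_0-w_0\in L^1(\mathbb R)$ by \rf{as u01}. The term $R$ measures how far $\mathcal L$ is from the Laplacian; since $J$ is even and $\vert x\vert^2J\in L^1(\mathbb R)$, a Taylor expansion (equivalently, an expansion of $\widehat J(\xi)-1$ near $\xi=0$) yields $\Vert R(t)\Vert_p\leqslant C\big(\Vert w_{xx}(t)\Vert_p+\text{lower order}\big)$, with $\Vert R(t)\Vert_1$ integrable near $t=0$ and of order $t^{-1}$ for large $t$, so that $\int_0^t\Vert R(s)\Vert_1\,ds\leqslant C\log(2+t)$.

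The estimate for $z$ is then obtained at the two endpoints $p=1$ and $p=\infty$. For finite $p$, testing the equation with $|z|^{p-2}z$ makes the nonlocal term nonpositive by the entropy dissipativity $\int\beta'(z)\mathcal{L}z\,dx=-\tfrac12\iint J(x-y)\big(\beta'(z(x))-\beta'(z(y))\big)\big(z(x)-z(y)\big)\,dx\,dy\leqslant0$ for convex $\beta$, while the convective term equals $-\tfrac{p-1}{2p}\int (u+w)_x|z|^p\,dx\leqslant0$ because $(u+w)_x=u_x+w_x\geqslant0$. Hence $\frac{d}{dt}\Vert z\Vert_p\leqslant\Vert R\Vert_p$, which as $p\to1$ gives $\Vert z(t)\Vert_1\leqslant\Vert z_0\Vert_1+\int_0^t\Vert R\Vert_1\,ds\leqslant C\log(2+t)$. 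The decaying endpoint $p=\infty$ is the heart of the matter. I would represent $z$ by Duhamel's formula for $\pt_t-\mathcal{L}$ and exploit the large-time heat-like decay of the semigroup $e^{t\mathcal{L}}$: although $\mathcal L$ provides no instantaneous smoothing (its symbol is only $e^{-t}$ at high frequencies), the assumption $\vert x\vert^2J\in L^1$ forces the kernel of $e^{t\mathcal{L}}$ to be asymptotically Gaussian with variance $\sim t$ (a central-limit effect for the underlying jump process), so that $\Vert e^{t\mathcal{L}}f\Vert_q\leqslant C t^{-\frac12(1/p-1/q)}\Vert f\Vert_p$ for $t\geqslant1$, with an extra factor $t^{-1/2}$ when a derivative falls on the kernel. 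In the Duhamel integral I would split each time integration: on the large-time part the derivative in the transport term is absorbed into the kernel, while on the small-time part (where $\mathcal L$ does not smooth) the derivative is integrated by parts onto $(u+w)z$ and controlled by $\Vert u+w\Vert_\infty$, the uniform $BV$-bound on $z_x$, and the $L^1$-bound on $z$. This yields $\Vert z(t)\Vert_\infty\leqslant C t^{-1/2}[\log(2+t)]^{1/2}$.

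Finally, interpolating the endpoints, $\Vert z(t)\Vert_p\leqslant\Vert z(t)\Vert_1^{1/p}\Vert z(t)\Vert_\infty^{1-1/p}\leqslant C t^{-(1-1/p)/2}[\log(2+t)]^{(1+1/p)/2}$, and adding $\Vert w(t)-w^R(t)\Vert_p\leqslant C t^{-(1-1/p)/2}$ from Lemma \ref{w_R}, the triangle inequality $\Vert u-w^R\Vert_p\leqslant\Vert u-w\Vert_p+\Vert w-w^R\Vert_p$ gives exactly \rf{glowna nierownosc}. I expect the main obstacle to be the decaying $L^\infty$ estimate: the operator $\mathcal{L}$ offers no high-frequency regularization, so the diffusive $t^{-1/2}$ gain must be extracted solely from the large-time Gaussian behaviour of $e^{t\mathcal{L}}$ and reconciled, near $s=t$, with the absence of smoothing by means of the monotonicity-induced $L^1$ and $BV$ controls on $z$; tracking the precise power of the logarithm through this split is the delicate bookkeeping.
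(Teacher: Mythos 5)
Your construction of the solution (vanishing viscosity, comparison principle, propagation of monotonicity, $L^1$-contraction for uniqueness), your $L^1$ estimate $\Vert u(t)-w(t)\Vert_1\leqslant C\log(2+t)$ (sign/entropy test, convection killed by $(u+w)_x\geqslant 0$, remainder $R=\mathcal{L}w-w_{xx}$ controlled by Taylor expansion and $\Vert w_{xx}(t)\Vert_1\leqslant Ct^{-1}$), and the final interpolation all coincide with the paper's scheme. The gap is exactly where you located the difficulty: the decaying $L^\infty$ bound. Your mechanism for it --- Duhamel for $\partial_t-\mathcal{L}$ plus Gaussian-type decay of $e^{t\mathcal{L}}$ --- cannot close, because the convection term is \emph{scaling-critical} for the target rate. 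Indeed, in the Duhamel integral the term $\tfrac12\partial_x\bigl((u+w)z\bigr)$ must be estimated with $u+w$ of size $u_+-u_-$ (no smallness is assumed), $\Vert z(s)\Vert_1\sim\log(2+s)$ and the ansatz $\Vert z(s)\Vert_p\lesssim s^{-(1-1/p)/2}$; using any derivative semigroup bound $\Vert\partial_x e^{\tau\mathcal{L}}g\Vert_\infty\lesssim \tau^{-\frac{1}{2p}-\frac12}\Vert g\Vert_p$ the integrand behaves like
\begin{equation*}
(t-s)^{-\frac{1}{2p}-\frac12}\,s^{-\frac12+\frac{1}{2p}},
\end{equation*}
whose integral over $(0,t/2)$ is $O(1)$ (up to logarithms) \emph{for every} $p$: the powers cancel identically, so no linear estimate extracts decay, and without a smallness hypothesis on $u_+-u_-$ you cannot run a bootstrap either. (This criticality is precisely why Kawashima--Tanaka needed small data, and why this paper follows a different route.) A secondary problem: your $L^1\to L^\infty$ bounds for $e^{t\mathcal{L}}$ require more than \rf{as J} (some local-CLT/Fourier-integrability hypothesis on $J$); under \rf{as J} alone the regular part of the kernel of $e^{t\mathcal{L}}$ need not be bounded.

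The paper's way around this is to get the decay from the nonlinearity itself rather than from the semigroup: differentiating the (regularized) equation and multiplying by $(\ue_x)^{p-1}$, the nonlocal term is dissipative (Lemmas \ref{convex} and \ref{Kato}) and the \emph{convection} term, thanks to $\ue_x\geqslant 0$, produces the good term $-(1-1/p)\int(\ue_x)^{p+1}dx$, leading to the Oleinik-type estimate
\begin{equation*}
\Vert \ue_x(t)\Vert_p\leqslant t^{-1+1/p}\Vert u_{0,x}\Vert_1^{1/p},
\end{equation*}
uniformly in $\varepsilon$ (Theorem \ref{jeszcze nie wiem jaki label}). Then, instead of interpolating between $\Vert z\Vert_1$ and $\Vert z\Vert_\infty$, one uses Gagliardo--Nirenberg $\Vert z\Vert_p\leqslant C\Vert z_x\Vert_\infty^{a}\Vert z\Vert_1^{1-a}$ with $a=\tfrac12(1-1/p)$ and $\Vert z_x\Vert_\infty\leqslant\Vert \ue_x\Vert_\infty+\Vert w_x\Vert_\infty\leqslant Ct^{-1}$, which together with the logarithmic $L^1$ bound gives \rf{glowna nierownosc}. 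Note that your uniform BV control $\Vert z_x(t)\Vert_1\leqslant 2(u_+-u_-)$ carries no decay and cannot substitute for this gradient decay; to repair your proof you would need to replace the semigroup step by such a nonlinear gradient estimate.
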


\begin{rem}
Although the nonlocal operator ${\cal L}u=J*u-u$ has no regularizing property as
{\it e.g.} the Laplace operator, we have still global-in-time continuous solutions, 
because, for non-decreasing initial condition, the nonlinear term 
in equation (\ref{rownanie}) does not develope shocks in finite time. 
\end{rem}

The paper is organized as follows. In the next section, we gather results concerning an equation regularized by the usual viscosity term
and auxiliary lemmas on properties of the nonlocal operator ${\cal L}$.
The main result on the large time behaviour of solutions to the regularized problem
is shown in Section 4. The convergence of regularized solutions to a weak solution of the nonlocal problem (\ref{rownanie})-(\ref{warunek})  and
Theorem \ref{th main} are proved in Section~5.

\vskip 0,5cm
\noindent{\bf Notation.}
By $\|\cdot\|_p$ we denote the $L^p$-norm  of a function defined on $\R.$
Integrals without integration limits are defined on the whole line $\mathbb R.$
Several numerical constants are denoted by  $C$.

%%%%%%%%%%%%%%%%%%%%%%%%%%%%%%%%%%%%%%%%%%%%%%%%%%%%%%%%%%
%%%%%%%%%%%%%%%%%%%%%%%%%%%%%%%%%%%%%%%%%%%%%%%%%%%%%%%%%%
%%%%%%%%%%%%%%%%%%%%%%%%%%%%%%%%%%%%%%%%%%%%%%%%%%%%%%%%%%
%%%%%%%%%%%%%%%%%%%%%%%%%%%%%%%%%%%%%%%%%%%%%%%%%%%%%%%%%%%%%%%
\section{Regularized problem}
%First, we regularize the nonlocal problem (\ref{rownanie})-(\ref{warunek}) adding the usual viscosity term $\varepsilon u_{xx},$ 
%$where $\varepsilon>0$ for which we obtain estimates independent of $\varepsilon$ to prove results stated in the previous section.
%Thus, 
In this section, we consider the regularized problem
\begin{equation}
\label{rownanie zregularyzowane}
u_t=\varepsilon u_{xx}+{\cal L} u-uu_x, \qquad x\in \mathbb{R},~t>0
\end{equation}
\begin{equation}
 \label{warunek ue}
u(x,0)=u_0(x).
 \end{equation}
with fixed $\varepsilon>0.$ Our first goal is to show that this initial value problem has a unique smooth global-in-time solution. 
%Moreover, we shall obtain estimates of this solution which are independent of $\varepsilon\in(0,1].$

\begin{theorem} ({\it Existence of solutions.})
\label{istnienie zregularyzowanego}
If $u_0\in L^\infty(\mathbb R)$ and ${\cal L} u=J*u-u,$ where the kernel $J$ satisfies (\ref{as J}), then 
the regularized problem (\ref{rownanie zregularyzowane})-(\ref{warunek ue}) has a solution $\ue\in L^\infty(\mathbb R\times[0,\infty]).$ 
Moreover, this solution satisfies:
%Let ${\cal L} u=J*u-u,$ where the kernel $J$ satisfies (\ref{as J}). Assume that $u_0$ satisfies the conditions (\ref{as u01})-(\ref{as u02}).
%Then, the regularized problem (\ref{rownanie zregularyzowane})-(\ref{warunek ue}) has a unique global-in-time solution $u=\ue$ 
%\newline\indent REGULARNOSC
%\newline\noindent satisfying
\item[(i)] $u\in C^\infty(\mathbb R\times (0,\infty))$ and all its derivatives are bounded on $\mathbb R\times(t_0,\infty)$ for all $t_0 > 0,$
\item[(ii)] for all $(x,t)\in\mathbb R\times [0,\infty)$  
\begin{equation}
\label{L}
\underset{x\in\mathbb R}{{\rm essinf}~ u_0}\leqslant \ue(x,t)\leqslant \underset{x\in\mathbb R}{{\rm esssup}~u_0}
%\Vert \ue(t)\Vert_\infty\leqslant\Vert u_0\Vert_\infty.
\end{equation} 
\item[(iii)] $u$ satisfies the equation (\ref{rownanie zregularyzowane}) in the classical sense,
\item[(iv)] $u(t)\to u_0$ as $t\to 0,$ in $L^\infty (\mathbb R)$ weak-* and in $L^p_{loc}(\mathbb R)$ for all $p\in [1,\infty).$
\newline This is a unique solution of problem (\ref{rownanie zregularyzowane})-(\ref{warunek ue}) in the sense of the integral formulation (\ref{duhamel}), below.
\end{theorem}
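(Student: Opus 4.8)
The plan is to prove existence and all four properties by the standard vanishing-viscosity/parabolic-regularization strategy, treating the nonlocal term ${\cal L}u = J*u - u$ as a bounded perturbation of the heat semigroup. First I would recast \rf{rownanie zregularyzowane}--\rf{warunek ue} in Duhamel (mild) form using the heat semigroup $e^{\ep t\pt_{xx}}$ generated by $\ep\pt_{xx}$: writing $G_\ep(x,t)$ for the Gauss--Weierstrass kernel, a solution should satisfy
\begin{equation*}
\ue(t) = G_\ep(t)*u_0 + \int_0^t G_\ep(t-s)*\Big({\cal L}\ue(s) - \tfrac12\big(\ue(s)^2\big)_x\Big)\,ds,
\end{equation*}
which is the integral formulation \rf{duhamel} referenced in (iv). The key observation making this tractable is that ${\cal L}$ is a bounded operator on $L^\infty$: since $\|J\|_1 = 1$, one has $\|{\cal L}u\|_\infty \le 2\|u\|_\infty$, so the nonlocal term contributes no derivative loss and no smallness constraint.

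I would then construct a local-in-time solution by a contraction-mapping (Banach fixed-point) argument in a ball of $C([0,T];L^\infty(\R))$ for $T$ small. The only term carrying a spatial derivative is the Burgers flux $\tfrac12(\ue^2)_x$; I would move that derivative onto the heat kernel, using the smoothing estimate $\|\pt_x G_\ep(t)\|_1 = C(\ep t)^{-1/2}$. This produces a time-integral with an integrable $(t-s)^{-1/2}$ singularity, so the map is a contraction on a short interval whose length depends only on $\ep$ and $\|u_0\|_\infty$. To obtain property (i), the parabolic smoothing of $G_\ep$ lets me bootstrap: differentiating the Duhamel formula repeatedly and using $\|\pt_x^k G_\ep(t)\|_1 = C_k(\ep t)^{-k/2}$ shows $\ue \in C^\infty(\R\times(0,\infty))$ with all derivatives bounded away from $t=0$, and (iii) follows since a smooth mild solution is a classical solution.

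The crucial step — both for property (ii) and for promoting the local solution to a global one — is the maximum principle in \rf{L}. Here I would argue that for non-decreasing, bounded data the equation admits a comparison principle: the constants $m = \operatorname{essinf}u_0$ and $M = \operatorname{esssup}u_0$ are respectively a sub- and super-solution, because the nonlocal term satisfies ${\cal L}c = 0$ for any constant $c$ and ${\cal L}u \ge 0$ (resp. $\le 0$) wherever $u$ attains an interior minimum (resp. maximum), owing to $J\ge 0$ and $\int J = 1$. Combined with the degenerate-parabolic structure of $\ep\pt_{xx} - u\pt_x$, this pins $\ue$ between $m$ and $M$ for all time. Since the local existence time depends only on the $L^\infty$-norm, and that norm is now controlled uniformly by \rf{L}, I can iterate the local construction on successive intervals to reach every $t>0$, giving the global bound $\ue \in L^\infty(\R\times[0,\infty))$.

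Finally, for the initial-trace property (iv) I would use the approximate-identity behaviour of $G_\ep(t)$ as $t\to0^+$: the linear term $G_\ep(t)*u_0 \to u_0$ in $L^p_{loc}$ and weak-$*$ in $L^\infty$, while the Duhamel integral vanishes as $t\to0$ because its integrand is bounded in $L^\infty$ uniformly (again by the $L^\infty$ bound and boundedness of ${\cal L}$) and the $(t-s)^{-1/2}$ factor integrates to $O(t^{1/2})$. Uniqueness in the class \rf{duhamel} follows from the same contraction estimate applied to the difference of two solutions, via a Gronwall argument absorbing the $(t-s)^{-1/2}$ kernel. The main obstacle I anticipate is making the maximum principle fully rigorous: the operator ${\cal L}$ is nonlocal, so the usual pointwise argument at an extremum must be replaced by a careful global comparison (for instance, applying the parabolic maximum principle to a regularized or truncated problem and passing to the limit, or testing against the positive/negative part of $\ue - M$), and one must verify the sign of ${\cal L}$ at extrema without appealing to locality.
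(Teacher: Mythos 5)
Your overall architecture coincides with the paper's: rewrite \rf{rownanie zregularyzowane}--\rf{warunek ue} in the Duhamel form \rf{duhamel}, run a Banach fixed-point argument exploiting that ${\cal L}$ is bounded on $L^\infty(\R)$ (the paper cites \cite{DGV} for exactly this contraction-plus-bootstrap step, which yields (i), (iii), (iv)), and then promote the local solution to a global one via the $L^\infty$ bound \rf{L}. Up to that point your proposal and the paper agree.

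The genuine gap is the step you yourself flag as the ``main obstacle'': the comparison principle (ii). Your heuristic --- constants are sub/super-solutions because ${\cal L}c=0$, and ${\cal L}u\leqslant 0$ at an interior maximum --- is not a proof, precisely because on the unbounded line the supremum of $\ue(\cdot,t)-M$ need not be attained at any point, so there is no extremum at which to read off signs; and the two fallback strategies you mention are left undeveloped and face real difficulties (for step-like data $(\ue-M)^{+}$ need not be integrable, and the convection term $\ue\ue_x$ carries no sign, so an energy-type test is not straightforward). The paper closes exactly this gap with a concrete device adapted from \cite{K_eveq}: Lemma \ref{lem:pass} shows that along any \emph{maximizing sequence} $\{x_n\}$ of a $C^3_b$ function $\varphi$ one has $\varphi'(x_n)\to 0$, $\limsup_{n}\varphi''(x_n)\leqslant 0$, and --- via Fatou's lemma --- $\limsup_{n}{\cal L}\varphi(x_n)\leqslant 0$; then Proposition \ref{eveq} studies $\Phi(t)=\sup_{x\in\R}u(x,t)$, proves it is locally Lipschitz (hence differentiable a.e.\ by Rademacher), and combines a Taylor expansion in time with Lemma \ref{lem:pass} to conclude $\Phi'(t)\leqslant 0$ a.e., so that $u(\cdot,0)\leqslant 0$ forces $u\leqslant 0$ for all later times. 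Applying this to $\ve=\ue-{\rm esssup}~u_0$ with drift $b=\ve+m$ gives \rf{L}. Without this lemma (or a rigorous substitute), your proof of (ii) --- and consequently of global existence and of the claimed bound $\ue\in L^\infty(\R\times[0,\infty))$ --- remains incomplete.
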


%In fact, $u$ takes its values between the essential lower and upper bounds of $u_0.$ More precisely, we have the following theorem.
%\begin{theorem} ({\it Comparison principle.})
%\label{zasada porownawcza} 
%Let $T>0$ be arbitrary. Assume that $\ue$ is a local-in-time solution of the regularized problem (\ref{rownanie zregularyzowane})-(\ref{warunek ue})
%on an interval $[0,T]$ with properties stated in Theorem~\ref{istnienie zregularyzowanego}. Then
%\begin{equation}
%\label{comparison}
% \underset{x\in\mathbb R}{{\rm essinf}~ u_0}\leqslant \ue(x,t)\leqslant \underset{x\in\mathbb R}{{\rm esssup}~u_0}
%\end{equation}
%for all $(x,t)\in \mathbb R\times [0,T].$
%\end{theorem}
In the following theorem we collect other properties of solutions to the regularized problem.

\begin{theorem}
\label{wlasnosci rozwiazania}
Assume that the kernel $J$ satisfies (\ref{as J}). Let $\ue$ be a solution of the regularized problem corresponding to
an initial condition $u_0$ satisfying (\ref{as u01})
\newline If $ u_{0,x}\in L^1(\mathbb R)$ then  
\begin{equation}
\label{prawo zach}
\int \ue_x(x,t) ~dx=\int u_{0,x}(x)~ dx.
\end{equation}
If $u_{0,x}\geqslant 0$ then
\begin{equation*}
\ue_x(x,t)\geqslant 0
\end{equation*}
for all $x\in \mathbb R$ and $t\geqslant 0.$
\newline Moreover, for two initial conditions $u_0, ~\bar u_0$ satisfying (\ref{as u01})-(\ref{as u02}), the corresponding solutions
$\ue,~\bar u^\varepsilon$ satisfy
\begin{equation}
\label{kontrakcja}
\Vert \ue(t)-\bar u^\varepsilon(t)\Vert_1\leqslant \Vert u_0-\bar u_0\Vert_1.                       
\end{equation}
\end{theorem}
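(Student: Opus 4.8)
The plan is to derive all three assertions from maximum-principle and sign-function arguments applied to the (quasilinear, nonlocal) equations satisfied by $v=\ue_x$ and by the difference of two solutions. Throughout I would use that, by Theorem \ref{istnienie zregularyzowanego}, $\ue$ is smooth with bounded derivatives on $\mathbb R\times(t_0,\infty)$, so that differentiating the equation and integrating by parts are legitimate once suitable decay at $x=\pm\infty$ is available.

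Differentiating (\ref{rownanie zregularyzowane}) in $x$ and writing $v=\ue_x$ gives
\begin{equation*}
v_t+\ue v_x=\varepsilon v_{xx}+\mathcal L v-v^2,
\end{equation*}
since $\mathcal L$ commutes with $\partial_x$ and $(\ue\ue_x)_x=v^2+\ue v_x$. For the monotonicity claim I would observe that $\mathcal L$ annihilates functions constant in $x$ (because $\int J=1$), so the spatially constant function $\underline v(t)=m_0/(1+m_0t)$ with $m_0=\mathrm{essinf}\,u_{0,x}\ge 0$ solves $\underline v_t=-\underline v^2$ and is therefore an exact subsolution with $\underline v(0)=m_0\le v(\cdot,0)$. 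A comparison principle for the displayed equation — the nonlocal term $J*v$ preserves ordering since $J\ge0$, the reaction difference $-v^2+\underline v^2=-(v+\underline v)(v-\underline v)$ contributes a bounded zeroth-order coefficient absorbed by an exponential weight $e^{\lambda t}$, and the local terms obey the classical minimum principle — then yields $v\ge\underline v\ge0$. For the conservation law I would instead integrate the equation in the form $v_t=\varepsilon v_{xx}+\mathcal L v-\tfrac12(\ue^2)_{xx}$ over $\mathbb R$: the two pure-derivative terms integrate to $0$ by decay of $\ue_x$ and $\ue\ue_x$ at infinity, while $\int(J*v-v)\,dx=0$ by Fubini and $\int J=1$, so $\tfrac{d}{dt}\int v\,dx=0$.

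For the $L^1$-contraction I would set $w=\ue-\bar u^\varepsilon$, which solves
\begin{equation*}
w_t=\varepsilon w_{xx}+\mathcal L w-\tfrac12\big((\ue+\bar u^\varepsilon)w\big)_x,
\end{equation*}
multiply by a smooth approximation $\operatorname{sgn}_\delta(w)$ of $\operatorname{sgn}w$, integrate, and let $\delta\to0$. The diffusion contribution is $\le0$; the convection contribution vanishes because, with $a=\tfrac12(\ue+\bar u^\varepsilon)$, the identity $\operatorname{sgn}(w)(aw)_x=a_x|w|+a|w|_x$ integrates — after one further integration by parts in which the two $a_x|w|$ terms cancel — to zero; and the nonlocal contribution satisfies $\int\operatorname{sgn}(w)(J*w-w)\,dx\le\|J*w\|_1-\|w\|_1\le\|J\|_1\|w\|_1-\|w\|_1=0$ by Young's inequality and $\|J\|_1=1$. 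Hence $\tfrac{d}{dt}\|w(t)\|_1\le0$, which is (\ref{kontrakcja}). As a byproduct, applying this contraction to $\ue$ and its spatial translate $\ue(\cdot+h,t)$ and dividing by $h\to0$ gives $\|\ue_x(t)\|_1\le\|u_{0,x}\|_1$, which supplies precisely the $L^1$-integrability of $\ue_x$ used in the conservation step.

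The formal computations above are short; the real work — and the step I expect to be the main obstacle — is justifying them under only the regularity furnished by Theorem \ref{istnienie zregularyzowanego}. Concretely, I would need enough decay of $\ue$ and its first two derivatives as $x\to\pm\infty$ to discard every boundary term, the $L^1$-membership of $\ue_x$ (obtained via the translate argument above), and a careful statement and proof of the comparison principle for the nonlocal parabolic equation for $v$, together with the $\delta\to0$ passage in the contraction estimate. Each of these is routine in spirit but must be handled with care precisely because $\mathcal L$ has no smoothing effect.
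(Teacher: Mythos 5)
Your proposal is correct in outline, but on two of the three claims it takes a genuinely different route from the paper, and the trade-offs are worth recording. For the conservation law the ideas coincide (integrate and use $\int{\cal L}\varphi\,dx=0$, Lemma \ref{Kato}), except that the paper works on the $x$-differentiated Duhamel formula (\ref{duhamel}) and uses $\int\Ge\,dx=1$, $\int\Ge_x\,dx=0$, so that no pointwise decay of $\ue_x$ or $\ue\ue_x$ at $x=\pm\infty$ needs to be justified, only Fubini-type arguments. For the sign of $\ue_x$ the paper uses no comparison principle at all: it multiplies the differentiated equation by the negative part $(\ue_x)^-$, integrates in $x$, and controls the nonlocal term by the convexity inequality (\ref{convex inequality}) combined with (\ref{zero}), so that the $L^2$-norm of $(\ue_x)^-$ cannot grow and hence remains zero since $(u_{0,x})^-=0$ a.e. Your subsolution argument instead requires a comparison principle for the equation of $v=\ue_x$, which carries the zeroth-order term $-v^2$; Proposition \ref{eveq} covers only $u_t=u_{xx}+{\cal L}u-b(x,t)u_x$, so you would have to re-run the sup-function argument of Lemma \ref{lem:pass} and Proposition \ref{eveq} with a bounded zeroth-order coefficient (your $e^{\lambda t}$ weight), and you would need continuity of $\ue_x$ down to $t=0$, where the datum is only in $L^1(\mathbb R)$ --- exactly the bottleneck you flagged, and exactly what the paper's energy argument is designed to bypass. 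Note also that $0\leqslant u_{0,x}\in L^1(\mathbb R)$ forces ${\rm essinf}\,u_{0,x}=0$, so your subsolution $m_0/(1+m_0t)$ is identically zero and the Oleinik-type refinement buys nothing here beyond $\ue_x\geqslant 0$. For the $L^1$-contraction the paper simply repeats the computation of Lemma \ref{log}: after multiplying by a smooth nondecreasing approximation of sgn, the convection term is shown to be \emph{nonnegative} using the monotonicity of the second solution, $\bar u^\varepsilon_x\geqslant 0$, i.e. hypothesis (\ref{as u02}), while the nonlocal term is handled by (\ref{signum}); your Kruzhkov-type cancellation $\int\big(a_x|w|+a|w|_x\big)\,dx=0$ needs no monotonicity of either solution, so it is the more general argument, and your translate trick yielding $\Vert\ue_x(t)\Vert_1\leqslant\Vert u_{0,x}\Vert_1$ is a genuine byproduct (the paper obtains this bound, with equality, by combining (\ref{prawo zach}) with $\ue_x\geqslant0$). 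In summary both routes work: the paper's proof stays entirely inside its elementary integral lemmas and never invokes pointwise maximum-principle machinery for the derivative equation, whereas your plan is the classical viscous-conservation-law toolkit, whose one unproved ingredient --- the comparison principle extended to zeroth-order terms --- is real work that the paper's design avoids.
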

%%%%%%%%%%%%%%%%%%%%%%%%%%%%%%%%%%%%%%%%%%%%%%%%%%%%%%%%%%%%%%%%%%
{\it Proof of Theorem \ref{istnienie zregularyzowanego}.}
Following the usual procedure, based on the Duhamel principle, we rewrite problem (\ref{rownanie zregularyzowane})-(\ref{warunek ue})
in the itegral form
\begin{equation}
\label{duhamel}
\begin{split}
 \ue (x,t)=(\Ge(\cdot,t)*\ue_0)(x)&+\int_0^t(\Ge(\cdot,t-s)*{\cal L}\ue(\cdot,s))(x)~ ds\\
&-\int_0^t (\Ge(\cdot,t-s)*\ue(\cdot,s)\ue_x(\cdot,s))(x)~ ds ,
\end{split}
\end{equation}
where $\Ge(x,t)=(4\pi\varepsilon t)^{-1/2} e^{-\frac{\vert x\vert^2}{4\varepsilon t}}$ 
is the fundamental solution of the heat equation $u_t=~\varepsilon u_{xx}.$
It is a completely standard reasoning (details can be found for example in (\cite[Section 5]{DGV}), 
based on the Banach contraction principle, that the integral equation (\ref{duhamel}) 
has a unique local-in-time regular solution on $[0,T]$ with properties stated in (i), (iii) and (iv).
Here, one should notice that the second term on the right hand side of the equation (\ref{duhamel}) does not cause 
any problem to adapt the arguments from (\cite[Section 5]{DGV}) in our case. This is due to the fact that the convolution operator
${\cal L}$ is bounded on $L^\infty(\mathbb R).$
Hence, we skip these details. This solution is global-in-time because of estimates (\ref{L}) 
%from Theorem \ref{zasada porownawcza},
%\begin{equation}
%\underset{x\in\mathbb R}{{\rm essinf}~ u_0}\leqslant \ue(x,t)\leqslant \underset{x\in\mathbb R}{{\rm esssup}~u_0}
%\end{equation}
which we are going to prove below.
\endProof

In the proof of the comparison principle expressed by inequalities (\ref{L}) we adapt ideas described in \cite{K_eveq}.
It is based in the following auxiliary results.

\begin{lemma}\label{lem:pass}
Let $\varphi \in C^3_b(\R).$ 
%be such that $\varphi,\varphi',\varphi''\in L^\infty(\mathbb R).$
If the sequence $\{x_n\}\subset \R$ satisfies
$\varphi(x_n)\to \underset{x\in\R}{\sup}~\varphi(x)$
then 
\item[(i)] $\underset{n\to\infty}{\lim}~\varphi'(x_n)=0$
\item[(ii)] $\underset{n\to\infty}{\limsup}~\varphi''(x_n)\leqslant 0$
\item[(iii)] $\underset{n\to\infty}{\limsup}~{\cal L}\varphi(x_n)\leqslant 0.$
\end{lemma}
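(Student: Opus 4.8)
The plan is to exploit the uniform boundedness of the derivatives of $\varphi$ up to order three together with Taylor's theorem, working directly along the approximating sequence $\{x_n\}$, since the value $S:=\sup_{x\in\R}\varphi(x)$ need not be attained at any point. Throughout I set $a_n:=S-\varphi(x_n)$, so that $a_n\geqslant 0$ and $a_n\to 0$ by hypothesis, and I write $M_2:=\|\varphi''\|_\infty$ and $M_3:=\|\varphi'''\|_\infty$, both finite because $\varphi\in C^3_b(\R)$.

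For part (i) I would apply the second-order Taylor formula $\varphi(x_n+h)=\varphi(x_n)+\varphi'(x_n)h+\tfrac12\varphi''(\eta)h^2$ with $|\varphi''(\eta)|\leqslant M_2$. Since $\varphi(x_n+h)\leqslant S$, this yields $\varphi'(x_n)h\leqslant a_n+\tfrac12 M_2 h^2$ for every $h\in\R$; choosing $h>0$ and then $h<0$ gives $|\varphi'(x_n)|\leqslant a_n/h+\tfrac12 M_2 h$ for all $h>0$. Optimizing in $h$ (the choice $h=\sqrt{2a_n/M_2}$ yields the clean bound $|\varphi'(x_n)|\leqslant\sqrt{2M_2 a_n}$), or simply letting $n\to\infty$ and afterwards $h\to 0$, forces $\varphi'(x_n)\to 0$.

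For part (ii) the second-order expansion is not enough, because the intermediate point $\eta$ differs from $x_n$ and carries no information about $\varphi''(x_n)$; so I would use the third-order formula $\varphi(x_n\pm h)=\varphi(x_n)\pm\varphi'(x_n)h+\tfrac12\varphi''(x_n)h^2\pm\tfrac16\varphi'''(\xi_\pm)h^3$ instead. Adding the two inequalities $\varphi(x_n\pm h)\leqslant S$ cancels the first-order terms and produces $\varphi''(x_n)h^2\leqslant 2a_n+\tfrac13 M_3 |h|^3$, hence $\varphi''(x_n)\leqslant 2a_n/h^2+\tfrac13 M_3 h$ for $h>0$. Taking $\limsup_{n\to\infty}$ and then letting $h\to 0$ gives $\limsup_n\varphi''(x_n)\leqslant 0$.

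Part (iii) is independent of Taylor's theorem and follows directly from the structure of $\mathcal{L}$. Using $\int_\R J=1$ I would write $\mathcal{L}\varphi(x_n)=\int_\R J(y)\big(\varphi(x_n-y)-\varphi(x_n)\big)\,dy$ and then insert $S$ to obtain $\mathcal{L}\varphi(x_n)=\int_\R J(y)\big(\varphi(x_n-y)-S\big)\,dy+a_n$. Since $J\geqslant 0$ and $\varphi(x_n-y)-S\leqslant 0$ pointwise, the integral is nonpositive, whence $\mathcal{L}\varphi(x_n)\leqslant a_n\to 0$, which is exactly the assertion. The only points requiring care are that these manipulations are legitimate: $\varphi$ is bounded and $J\in L^1(\R)$, so $J*\varphi$ is well defined. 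The main conceptual obstacle is precisely that the supremum is not assumed to be achieved, which is why one estimates along $\{x_n\}$ and absorbs the Taylor remainders using the uniform bounds $M_2,M_3$, rather than invoking a first- or second-derivative test at a genuine maximizer.
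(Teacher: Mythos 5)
Your proof is correct. For parts (i) and (ii) it follows essentially the paper's strategy: both arguments rest on Taylor expansions around $x_n$ with the remainders controlled uniformly by $\|\varphi''\|_\infty$ and $\|\varphi'''\|_\infty$, precisely because the supremum need not be attained. Your version is tidier in two respects: optimizing in $h$ gives the quantitative bound $|\varphi'(x_n)|\leqslant\sqrt{2M_2a_n}$ (with $a_n=\sup\varphi-\varphi(x_n)$) and dispenses with the paper's subsequence extraction for $\varphi'(x_n)$; and by adding the expansions at $x_n+h$ and $x_n-h$ you cancel the first-order term symmetrically, so your (ii) does not depend on (i), whereas the paper's proof of (ii) invokes (i) to remove that term. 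Part (iii) is where you genuinely diverge. The paper writes ${\cal L}\varphi(x_n)=\int\big(\varphi(x_n-z)-\varphi(x_n)\big)J(z)\,dz$, observes that the integrand's $\limsup$ in $n$ is nonpositive, and applies Fatou's lemma --- necessarily in its reverse ($\limsup$) form, which tacitly requires the integrable majorant $2\|\varphi\|_\infty J(z)$, a hypothesis the paper does not spell out. You instead insert the supremum $S$ and use $\int J=1$, $J\geqslant 0$ to obtain the pointwise inequality ${\cal L}\varphi(x_n)\leqslant S-\varphi(x_n)=a_n\to 0$. This is more elementary (no measure-theoretic convergence theorem at all), sidesteps the domination issue entirely, and even yields a quantitative rate; the Fatou route, for its part, would remain usable in situations where one only has $\limsup$ information on the integrand rather than the uniform upper bound by $S$ that your argument exploits.
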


\begin{proof}
Since $\varphi''$ is  bounded, there exists $C>0$ such that
\begin{equation}
\label{Taylor_1}
\underset{x\in\R}{\sup}~\varphi(x)\geqslant \varphi(x_n-z)\geqslant \varphi(x_n)-\varphi'(x_n)~z-Cz^2
\end{equation}
for every $z\in \R.$ Since the sequence $\{\varphi'(x_n)\}$ is bounded, passing to the subsequence, we can assume that 
$\varphi'(x_n)\to p.$
Consequently, passing to the limit in (\ref{Taylor_1})  we obtain the inequality
\begin{equation*}
0\geqslant -pz-Cz^2 \quad\quad\mbox{for every} \quad\quad z\in \R,
\end{equation*}
which imediately implies $p=0$.

To prove inequality (ii), we use an analogous argument involving the inequality
\begin{equation}
\label{Taylor_2}
\underset{x\in\R}{\sup}~\varphi(x)\geqslant \varphi(x_n-z)\geqslant \varphi(x_n)-\varphi'(x_n)~z+\frac{1}{2} \varphi''(x_n)~z^2-Cz^3
\end{equation}
for all $z>0,$ where $C=\frac{1}{6}\Vert \varphi'''\Vert_{\infty}.$ Passing to the limit superior in (\ref{Taylor_2}), 
denoting $q=\underset{n\to\infty}{\limsup} \varphi''(x_n)$ and using (i) we obtain the inequality
\begin{equation*}
0\geqslant \frac{1}{2}qz-Cz^3 \quad\quad\mbox{for all} \quad\quad z>0.
\end{equation*}
Choosing $z>0$ arbitrarily small we deduce from this inequality that $q\leqslant 0$ which completes the proof of (ii).

Now, we prove that $\underset{n\to\infty}{\limsup}~{\cal L}\varphi(x_n)\leqslant 0.$
Note first, that by the definition of the sequence $\{x_n\}$, we have
\begin{equation*}
\varphi(x_n-z)-\varphi(x_n)\leqslant \underset{x\in\R}{\sup}~\varphi(x) -\varphi(x_n)\to 0
\quad\text{as}\quad n\to\infty. 
\end{equation*}
Hence, $\underset{n\to\infty}{\limsup} \Big(\varphi(x_n-z)-\varphi(x_n)  \Big)\leqslant 0. $
Applying the Fatou lemma to the expression
\begin{equation*}
{\cal L}\varphi(x_n)=
\int\Big(
\varphi(x_n-z)-u(x_n)\Big)J(z)~dz
\end{equation*}
ends the proof of (iii).
\end{proof}

We are now in a position to prove the comparison principle for equations with the nonlocal operator ${\cal L}.$

\begin{prop}
\label{eveq}
Assume that $u\in C_b(\R\times [0,T])\cap C^3_b(\R\times [\varepsilon ,T])$
is the solution of the equation
\begin{equation}
\label{eq:lin}
u_t=u_{xx}+{\cal L} u-b(x,t)u_x,
\end{equation}
where ${\cal L}$ is the nonlocal convolution operator given by (\ref{operator}) and 
$b=b(x,t)$ is a given and sufficiently regular real-valued function. Then
\begin{equation*}
u(x,0)\leqslant 0 \quad\mbox{implies}\quad u(x,t)\leqslant 0 \quad \mbox{for all}\quad x\in \R,~t\in [0,T].
\end{equation*}
\end{prop}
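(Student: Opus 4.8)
The plan is to prove the comparison principle via a perturbation-and-maximum-point argument, using Lemma~\ref{lem:pass} to handle the nonlocal term. The central difficulty is that $\R$ is unbounded, so the supremum of $u$ need not be attained; this is precisely why Lemma~\ref{lem:pass} was stated in terms of maximizing \emph{sequences} rather than maximum points, and the whole proof is organized to exploit that.

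First I would introduce a strictly positive auxiliary function to force a genuine interior maximum and to break ties at $t=0$. A standard choice is to set
\begin{equation*}
u_\delta(x,t)=u(x,t)-\delta e^{\la t}
\end{equation*}
for $\delta>0$ and a constant $\la$ to be fixed. Since $u$ is bounded, $u_\delta(x,t)\to-\infty$ as $\delta e^{\la t}$ grows, and it suffices to show $u_\delta\leqslant 0$ for every $\delta>0$ and then let $\delta\to 0^+$. Because $u(x,0)\leqslant 0$ we have $u_\delta(x,0)\leqslant -\delta<0$, so the bad set where $u_\delta>0$ is empty at $t=0$. Define
\begin{equation*}
t^*=\sup\{\,t\in[0,T]:u_\delta(x,s)\leqslant 0\ \text{for all}\ x\in\R,\ s\in[0,t]\,\}
\end{equation*}
and argue by contradiction that $t^*<T$ leads to a violation of the equation.

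The key step is the analysis at the (first) time $t^*$ where the sign is lost. At $t^*$ one has $\sup_x u_\delta(x,t^*)=0$, approached along a maximizing sequence $\{x_n\}$. Here I would apply Lemma~\ref{lem:pass} to $\vp=u_\delta(\cdot,t^*)=u(\cdot,t^*)-\delta e^{\la t^*}$, which belongs to $C^3_b(\R)$ by hypothesis: part (i) gives $u_x(x_n,t^*)\to 0$, part (ii) gives $\limsup u_{xx}(x_n,t^*)\leqslant 0$, and part (iii) gives $\limsup {\cal L}u(x_n,t^*)\leqslant 0$ (the constant $\delta e^{\la t^*}$ contributes nothing to ${\cal L}$ or to derivatives). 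Evaluating the equation~(\ref{eq:lin}) along the sequence and using that $b\,u_x\to 0$ because $b$ is bounded and $u_x(x_n,t^*)\to 0$, I obtain in the limit
\begin{equation*}
\partial_t u_\delta(x_n,t^*)=u_{xx}(x_n,t^*)+{\cal L}u(x_n,t^*)-b\,u_x(x_n,t^*)-\delta\la e^{\la t^*}\leqslant -\delta\la e^{\la t^*}<0
\end{equation*}
for $\la>0$, while the definition of $t^*$ as the first crossing time forces $\partial_t u_\delta\geqslant 0$ there. This contradiction shows $t^*=T$, hence $u_\delta\leqslant 0$ on $[0,T]$.

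The main obstacle I anticipate is making the maximum-point heuristic rigorous along the maximizing sequence: the time derivative sign $\partial_t u_\delta(x_n,t^*)\geqslant 0$ is clean only when the supremum is attained, whereas along a sequence one must argue that the crossing happens in the limit. The clean way to handle this is to choose $\la$ large enough (depending on $\|b\|_\infty$ and on a uniform bound from the boundedness of $u_{xx}$ and ${\cal L}u$) that the right-hand side of the equation is strictly negative at \emph{every} near-maximal point for $t$ slightly beyond $t^*$, and then derive a contradiction with the definition of $t^*$ through a short continuity argument rather than a pointwise derivative comparison. Passing $\delta\to0^+$ then yields $u\leqslant0$ on $\R\times[0,T]$, completing the proof; the regularity assumption $u\in C^3_b$ on $\R\times[\ep,T]$ is exactly what licenses the use of Lemma~\ref{lem:pass} and the boundedness needed to fix $\la$.
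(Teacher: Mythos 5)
Your overall strategy (perturb by $\delta e^{\lambda t}$, locate a first crossing time $t^*$, contradict the equation there) is the classical maximum-principle argument, and your use of Lemma \ref{lem:pass} at time $t^*$ is fine as far as it goes: since ${\cal L}$ annihilates constants, the lemma applied to $u_\delta(\cdot,t^*)$ indeed gives $u_x(x_n,t^*)\to 0$, $\limsup_n u_{xx}(x_n,t^*)\leqslant 0$ and $\limsup_n {\cal L}u(x_n,t^*)\leqslant 0$, hence $\limsup_n \partial_t u_\delta(x_n,t^*)\leqslant -\delta\lambda e^{\lambda t^*}<0$ for \emph{any} $\lambda>0$. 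The genuine gap is exactly where you flag it, and the repair you propose does not close it. Along a non-convergent maximizing sequence the inequality ``$\partial_t u_\delta\geqslant 0$ at the crossing'' is simply unavailable: at each $x_n$ one has $u_\delta(x_n,t^*)<0$, and nothing constrains the sign of $\partial_t u_\delta(x_n,t^*)$. To reach a contradiction you must control $u_\delta$ at points that are near-maximal at times $t>t^*$, and such points need not be close to, or even related to, maximizing sequences at time $t^*$; relating them back (via the uniform-in-$x$ Lipschitz continuity in time of $u$ and $u_t$ coming from $C^3_b$, plus a compactness/contradiction argument showing that any sequence of near-maximal points at times $t_n\searrow t^*$ is again a maximizing sequence at $t^*$) is the actual technical content of the proof, not a ``short continuity argument.''

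Moreover, the specific fix you suggest --- take $\lambda$ large depending on $\Vert b\Vert_\infty$ and on \emph{uniform} bounds for $u_{xx}$ and ${\cal L}u$ --- is self-defeating. At a general point the only available bound is the global one, say $\vert u_t\vert\leqslant M$; to force $\partial_t u_\delta<0$ there you need $\delta\lambda>M$, i.e.\ $\lambda\gtrsim M/\delta$, and then the perturbation $\delta e^{\lambda t}$ behaves like $\delta e^{Mt/\delta}$, which blows up as $\delta\to 0^+$, so the final limit no longer yields $u\leqslant 0$. If instead you use only the asymptotic negativity from Lemma \ref{lem:pass} at near-maximal points, then largeness of $\lambda$ is irrelevant (any $\lambda>0$ works) and you are back to the missing compactness step. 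It is instructive to compare with the paper, which never localizes a crossing time: it works with $\Phi(t)=\sup_{x\in\R}u(x,t)$ directly, proves $\Phi$ is locally Lipschitz (hence differentiable a.e., by Rademacher), and bounds the \emph{backward} difference quotient via the Taylor expansion $u(x,t)=u(x,t-s)+s\,u_t(x,t)+O(s^2)$ evaluated along a maximizing sequence at the single time $t$; Lemma \ref{lem:pass} is then invoked only at that same time, giving $\Phi(t)-\Phi(t-s)\leqslant Cs^2$, hence $\Phi'\leqslant 0$ a.e.\ and monotonicity of $\Phi$. This sidesteps entirely the cross-time matching that your argument requires but does not supply. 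Your route can be completed (with a fixed $\lambda>0$ and the compactness argument sketched above), but as written it has a real gap at its central step.
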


\begin{proof}
The function $\Phi(t)=\underset{x\in\R}{\sup}~u(x,t)$ is well-defined and continuous.
Our goal is to show that $\Phi$ is locally Lipschitz and $\Phi'(t)\leqslant 0$ almost everywhere.
To show the Lipschitz continuity of $\Phi$, for every $\varepsilon>0$ we choose $x_\varepsilon$ such that
\begin{equation*}
\underset{x\in\R}{\sup}~u(x,t)=u(x_\varepsilon,t)+\varepsilon.
\end{equation*}

Now, we fix $t,s\in I$, where $I\subset (0,T)$ is a bounded 
and closed interval and we suppose (without loss of generality)
that $\Phi(t)\geq \Phi(s)$.
Using the definition of $\Phi$ and regularity of $u$ we obtain
\begin{equation*}
\begin{split}
0\leq \Phi(t)-\Phi(s) &= \sup_{x\in\R} u(x,t)-\sup_{x\in\R} u(x,s)\\
&\leq \varepsilon +u(x_\varepsilon, t)-u(x_\varepsilon, s)\\
&\leq \varepsilon +\sup_{x\in\R} |u(x,t)-u(x,s)|\\
&\leq \varepsilon + |t-s| \sup_{x\in\R,t\in I}|u_t(x,t)|.
\end{split}
\end{equation*}
Since $\varepsilon>0$ and $t,s\in I$ are  arbitrary, we immediately  obtain
that the function $\Phi$ is locally Lipschitz,
hence, by the Rademacher theorem,  differentiable almost everywhere, as well. 

Let us now  differentiate $\Phi(t)=\underset{x\in\R}{\sup}~u(x,t)$ with respect to $t>0$.
By the Taylor expansion, for $0<s<t$, we have
\begin{equation*}
u(x,t)= u(x,t-s)+s u_t(x,t)+Cs^2. 
\end{equation*}

Hence, using equation (\ref{eq:lin}), we obtain
\begin{equation}
\label{es1}
u(x,t)\leqslant\underset{x\in\R}{\sup}~u(x,t-s) +s \Big(u_{xx}(x,t)+{\cal L} u(x,t)-b(x,t)u_x(x,t)\Big) +Cs^2.
\end{equation}
Substituting in (\ref{es1})  $x=x_n$, 
where $u(x_n,t)\to \underset{x\in\R}{\sup}~u(x,t)$ as $n\to\infty$,
passing to the limit using Lemma \ref{lem:pass}, we obtain the inequality
\begin{equation*}
\underset{x\in\R}{\sup}~u(x,t)\leq \underset{x\in\R}{\sup}~u(x,t-s) +Cs^2
\end{equation*}
which can be transformed into
\begin{equation*}
\frac{\Phi(t)-\Phi(t-s)}{s}\leq Cs.
\end{equation*}
For $s\searrow 0$, we obtain $\Phi'(t)\leq 0$ in those $t$, where $\Phi$ is differentiable.
\end{proof}

{\it Proof of Inequalities (\ref{L}).}
Let $m=\underset{x\in\mathbb R}{{\rm esssup}~u_0}$ then, since ${\cal L}m=0$, the function $\ve(x,t)=\ue(x,t)-m$ 
satisfies the following equation
\begin{equation*}
 \ve_t=\ve_{xx}+{\cal L}\ve-(\ve+m)\ve_x
\end{equation*}
Now, we use Proposition \ref{eveq} with $b(x,t)=\ve(x,t)+m$ to conclude that $\ve(x,t)\leqslant 0,$ so $\ue(x,t)\leqslant m$ 
for all $x\in \mathbb R,~ t\in [0,T],$ for arbitrary $T>0.$
The proof of the second inequality $\underset{x\in\mathbb R}{{\rm essinf}~u_0}\leqslant \ue(x,t)$ is completely analogous, hence we skip it.
\endProof

%%%%%%%%%%%%%%%%%%%%%%%%%%%%%%%%%%%%%%%%%%%%%%%%%%%%%%%%%%%%%%%%%%%%%%%%%%%%%%%%%%%%%%%%%%%%

%%%%%%%%%%%%%%%%%%%%%%%%%%%%%%%%%%%%%%%%%%%%%%%%%%%%%%%%
\vskip 0,5cm
{\it Proof of Theorem \ref{wlasnosci rozwiazania}.} 
In order to show the equality (\ref{prawo zach}), we differentiate Duhamel's formula (\ref{duhamel}), and we obtain
\begin{equation}
\label{---}
\begin{split}
 \ue_x(x,t)=(\Ge(\cdot,t)*\ue_{0,x})(x)&+\int_0^t(\Ge(\cdot,t-s)*{\cal L}\ue_x(\cdot,s))(x)~ ds\\
&-\int_0^t (\Ge_x(\cdot,t-s)*\ue(\cdot,s)\ue_x(\cdot,s))(x)~ ds.
\end{split}
\end{equation}
Then, integrating (\ref{---}) over $\mathbb R,$ we have
\begin{equation}
\label{pochodna}
\begin{split}
\int \ue_x(x,t)~dx&=\int (\Ge(\cdot,t)*\ue_{0,x})(x)~dx+ \int_0^t \int (\Ge(\cdot,t-s)*{\cal L}\ue_x(\cdot,s))(x)~dxds\\
&-\int_0^t \int (\Ge_x(\cdot,t-s)*\ue(\cdot,s)\ue_x(\cdot,s))(x))~dxds.
\end{split}
\end{equation}
Since $\int \Ge(x,t)~dx=1,$ the second term on the right hand side of (\ref{pochodna}) is egual to zero by the equality (\ref{zero}).
Now, making of use the equality $\int \Ge_x(x,t)~dx=0,$ leads to zero in the last term on the right hand side of (\ref{pochodna}), 
and that ends the proof of (\ref{prawo zach}).

To prove nonnegativity of $\ue_x,$ we first differentiate equation (\ref{rownanie zregularyzowane}) with respect to $x,$ and we have
\begin{equation}
\label{+}
(\ue_x)_t=\varepsilon (\ue_{xx})_x+{\cal L} \ue_x-\left(\ue \ue_x\right)_x, \qquad x\in \mathbb{R},~t>0.
\end{equation}
Next, we multiply (\ref{+}) by $(\ue_x)^-=\max\{-\ue_x,0\},$ and we integrate the resulting equation 
over $\mathbb R,$ to obtain
\begin{equation}
 \label{++}
\int (\ue_x)_t(\ue_x)^-~dx=\varepsilon \int (\ue_{xx})_x~(\ue_x)^-~dx+\int (\ue_x)^-~{\cal L} \ue_x~dx-\int \left(\ue \ue_x\right)_x~(\ue_x)^-~dx.
\end{equation}
Now, we notice that the integral on the left-hand side of (\ref{++}) is equal to $\frac{1}{2}\frac{d}{dt}\int_{\ue_x\leqslant 0} \left[(\ue_x)^-\right]^2~ dx.$
Straightforward calculations, based on the integration by parts in the first and third term of the right-hand side of (\ref{++}) lead to
\begin{equation*}
\label{+++}
 \frac{1}{2}\frac{d}{dt}\int_{\ue_x \leqslant 0} \left[(\ue_x)^-\right]^2 dx=-\int_{\ue_x \leqslant 0} \left[(\ue_x)^-\right]^2dx+
\int_{\ue_x \leqslant 0} (\ue_x)^-~{\cal L} \ue_xdx-\frac{1}{2} \int_{\ue_x \leqslant 0}  \left[(\ue_x)^-\right]^3dx.
\end{equation*}
%It is obvious that the first and third term of the right-hand side of (\ref{+++}) are non-positive.
By Lemmas \ref{convex} and \ref{Kato}, we have 
$\int_{\ue_x \leqslant 0} (\ue_x)^-~{\cal L} \ue_x~dx\leqslant 0.$ In a consequence, we obtain
\begin{equation*}
\label{4+}
 \frac{1}{2}\frac{d}{dt}\int_{\ue_x \leqslant 0} \left[(\ue_x)^-\right]^2 dx \leqslant 0,
\end{equation*}
and, this immediately implies 
\begin{equation*}
 \int_{\ue_x \leqslant 0} \left[(\ue_x)^-\right]^2 dx\leqslant \int_{\ue_x \leqslant 0} \left[(u_{0,x})^-\right]^2 dx.
\end{equation*}
By nonnegativity assumption from (\ref{as u02}) imposed on $u_{0,x}$ we have $(\ue_x)^-=0$ on the set $\{\ue_x \leqslant 0\},$  
thus, in a consequence, we have
$\ue_x(x,t)\geqslant 0$ for all $x\in \mathbb R$ and $t>0.$

To prove the $L^1$-contraction property in (\ref{kontrakcja}) is sufficient to repeat the reasons from Lemma \ref{log} below, hence
we do not reproduce it, here.
\endProof

%%%%%%%%%%%%%%%%%%%%%%%%%%%%%%%%%%%%%%%%%%%%%%%%%%%%%%%%%
%%%%%%%%%%%%%%%%%%%%%%%%%%%%%%%%%%%%%%%%%%%%%%%%%%%%%%%%%%%
%%%%%%%%%%%%%%%%%%%%%%%%%%%%%%%%%%%%%%%%%%%%%%%%%%%%%%%%%%%
%%%%%%%%%%%%%%%%%%%%%%%%%%%%%%%%%%%%%%%%%%%%%%%%%%%%%%%%%%%%%%%%

\section{Convergence of regularized solutions towards rarefaction wave}

Now, we show that a solution to the regularized problem satisfies certain decay estimates and converges 
towards a rarefaction wave with all estimates independent of $\varepsilon>0.$
The main result of this section reads as follows.

\begin{theorem}
\label{jeszcze nie wiem jaki label}
Let $u=\ue(x,t)$ be the solution of regularized problem (\ref{rownanie zregularyzowane})-(\ref{warunek ue}), with the kernel 
$J$ satisfying (\ref{as J}) and the initial data $u_0$ satisying (\ref{as u01})-(\ref{as u02}),
from Theorem~\ref{istnienie zregularyzowanego}.
For every $p\in[1,\infty]$ there exists $C=C(p)>0$ independent of $t$ and of $\varepsilon>0$ such that
\begin{equation}
\label{decay zregularyzowanego}
\Vert \ue_x(t)\Vert_p\leqslant t^{-1+1/p}\Vert u_{0,x}\Vert_1^{1/p} 
\end{equation}
and
\begin{equation}
\label{log zregularyzowanego}
\Vert \ue(t)-w^R(t)\Vert_p\leqslant C t^{-(1-1/p)/2}[\log(2+t)]^{(1+1/p)/2} 
\end{equation}
for all $t>0,$ where $w^R=w^R(x,t)$ is the rarefaction wave (\ref{rarefaction}).
\end{theorem}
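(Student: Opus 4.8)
The plan is to prove the two estimates in Theorem~\ref{jeszcze nie wiem jaki label} separately, obtaining both with constants independent of $\varepsilon$, so that the results pass to the limit $\varepsilon\to 0$ in Section~5. For the derivative decay estimate (\ref{decay zregularyzowanego}), I would start from the fact, established in Theorem~\ref{wlasnosci rozwiazania}, that $v:=\ue_x$ is nonnegative and satisfies the conservation law $\int v(x,t)\,dx=\int u_{0,x}\,dx=\Vert u_{0,x}\Vert_1$. This gives the endpoint $p=1$ bound immediately as an equality. The heart of the matter is the $L^\infty$ decay $\Vert \ue_x(t)\Vert_\infty\leqslant t^{-1}$: differentiating the equation yields (\ref{+}), namely $v_t=\varepsilon v_{xx}+{\cal L}v-(\ue v)_x=\varepsilon v_{xx}+{\cal L}v-\ue v_x-v^2$. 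Since $v\geqslant 0$, the quadratic absorption term $-v^2$ is the key. The plan is to compare $v$ with the spatially homogeneous supersolution $\bar v(t)=1/t$, which solves $\bar v'=-\bar v^2$; because $\mathcal L$ annihilates constants and $\varepsilon\bar v_{xx}=0$ and $\ue\bar v_x=0$, one checks $\bar v$ is a supersolution for $t>0$ with $\bar v(0^+)=+\infty\geqslant v$, and a comparison argument in the spirit of Proposition~\ref{eveq} applied to $v-\bar v$ gives $\Vert\ue_x(t)\Vert_\infty\leqslant 1/t$. Interpolating between $p=1$ and $p=\infty$ via $\Vert v\Vert_p\leqslant \Vert v\Vert_\infty^{1-1/p}\Vert v\Vert_1^{1/p}$ yields exactly $t^{-(1-1/p)}\Vert u_{0,x}\Vert_1^{1/p}$, which is (\ref{decay zregularyzowanego}).

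For the convergence estimate (\ref{log zregularyzowanego}) I would use the smooth approximate rarefaction wave $w=w(x,t)$ solving the viscous Burgers problem (\ref{eq-app})--(\ref{ini-app}); the known theory of rarefaction waves (cf.\ the cited \cite{HN, IO} and Lemma~\ref{w_R}) supplies both the closeness $\Vert w(t)-w^R(t)\Vert_p$ to the exact rarefaction wave and sharp bounds on $\Vert w_x(t)\Vert_q$ and $\Vert w_{xx}(t)\Vert_q$ that carry the logarithmic factors. The triangle inequality splits the task:
\begin{equation*}
\Vert \ue(t)-w^R(t)\Vert_p\leqslant \Vert \ue(t)-w(t)\Vert_p+\Vert w(t)-w^R(t)\Vert_p,
\end{equation*}
and the second term is controlled directly from the rarefaction-wave estimates. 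The real work is bounding the difference $z:=\ue-w$, which satisfies an equation obtained by subtracting (\ref{eq-app}) from (\ref{rownanie zregularyzowane}):
\begin{equation*}
z_t=\varepsilon z_{xx}+{\cal L}z-\tfrac12(\ue^2-w^2)_x+(\varepsilon-1)w_{xx}+({\cal L}w-w_{xx}).
\end{equation*}
Here $(\varepsilon-1)w_{xx}$ and the consistency error ${\cal L}w-w_{xx}$ are forcing terms; the latter is where assumption (\ref{as J}) that $\vert x\vert^2 J\in L^1$ enters, since a second-order Taylor expansion gives ${\cal L}w-\frac{\sigma^2}{2}w_{xx}$ small, with $\sigma^2=\int x^2 J(x)\,dx$ the variance of $J$ (absorbed by rescaling or handled as a correction). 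I would estimate $z$ by an energy/duality method: first obtain the $L^1$ bound by testing against $\sgn z$ (the $L^1$-contraction machinery already used for (\ref{kontrakcja}) and in Lemma~\ref{log}), then an $L^2$ bound by multiplying by $z$ and integrating, using nonnegativity of $-\int z\,{\cal L}z$ type terms (Lemmas~\ref{convex}, \ref{Kato}) to discard the nonlocal dissipation favourably and the decay of $\Vert w_x\Vert_\infty\sim 1/t$ to close a Gronwall inequality. Interpolation between the $L^1$ and $L^2$ bounds, combined with the $L^\infty$ bound from (ii) of Theorem~\ref{istnienie zregularyzowanego}, then produces all $L^p$ estimates with the stated exponents $t^{-(1-1/p)/2}$ and the logarithmic correction $[\log(2+t)]^{(1+1/p)/2}$.

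The main obstacle I anticipate is the careful accounting of the logarithmic factors and the $\varepsilon$-uniformity simultaneously. The $\log(2+t)$ comes from integrating the rarefaction-wave derivative bounds (which decay only like $t^{-1}$ in the interaction region near the two characteristics $x=u_\pm t$) against the time variable in the Gronwall/Duhamel argument, and one must track exactly how the power $(1+1/p)/2$ arises from the interpolation rather than a cruder exponent. Keeping every constant independent of $\varepsilon$ requires that the viscosity term $\varepsilon z_{xx}$ and the forcing $(\varepsilon-1)w_{xx}$ be handled so that no $\varepsilon^{-1}$ blows up; this is possible precisely because the nonlocal operator ${\cal L}$ supplies its own dissipation (via the sign conditions from Lemmas~\ref{convex} and \ref{Kato}) independent of $\varepsilon$, and because the comparison principle of Proposition~\ref{eveq} already gives $\varepsilon$-free $L^\infty$ control. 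The delicate quadratic nonlinearity $\frac12(\ue^2-w^2)_x=\frac12((\ue+w)z)_x$ must be split so that the part involving $w_x$ is absorbed by the dissipation using the smallness of $\Vert w_x\Vert_\infty$ for large $t$, which is the crux of closing the estimate.
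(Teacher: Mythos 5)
Your Part I (the derivative estimate (\ref{decay zregularyzowanego})) is correct but follows a genuinely different route from the paper. The paper multiplies the differentiated equation (\ref{+}) by $(\ue_x)^{p-1}$, kills the viscous and nonlocal terms by Lemmas \ref{convex} and \ref{Kato}, and integrates the resulting differential inequality (\ref{nierownosc rozniczkowa}) for $1<p<\infty$, obtaining $p=\infty$ by a limit. You instead prove the Oleinik-type bound $\Vert\ue_x(t)\Vert_\infty\leqslant 1/t$ by comparison of $v=\ue_x\geqslant 0$ with the supersolution $1/t$, and interpolate $\Vert v\Vert_p\leqslant\Vert v\Vert_\infty^{1-1/p}\Vert v\Vert_1^{1/p}$. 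This works and is arguably more transparent, provided you (a) extend Proposition \ref{eveq} to equations with the zeroth-order term $-(v+\bar v)z$ (the sign is favourable where $z>0$), and (b) repair the initial comparison, since $1/t$ is infinite at $t=0^+$: compare on $[t_0,\infty)$ with $M_0/(1+M_0(t-t_0))$, $M_0=\Vert\ue_x(t_0)\Vert_\infty$, and let $t_0\to 0$. Both arguments are uniform in $\varepsilon$.

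Part II, however, has a genuine gap. After the $L^1$ bound $\Vert\ue(t)-w(t)\Vert_1\leqslant C\log(2+t)$ (which is exactly the paper's Lemma \ref{log}), you propose to reach the $L^p$ rates by an $L^2$ energy estimate plus interpolation among the $L^1$, $L^2$ and $L^\infty$ bounds on $z=\ue-w$. This cannot produce any decay, because none of those three quantities decays: the $L^1$ norm \emph{grows} like $\log(2+t)$; the $L^\infty$ bound from Theorem \ref{istnienie zregularyzowanego}(ii) is merely a constant; and the energy identity, after discarding the favourable terms ($\int z\,{\cal L}z\,dx\leqslant 0$, $-\varepsilon\Vert z_x\Vert_2^2\leqslant 0$, and $-\tfrac14\int(\ue+w)_x z^2\,dx\leqslant 0$ because $(\ue+w)_x\geqslant 0$ --- note this term already has a sign, no smallness of $\Vert w_x\Vert_\infty$ is needed to absorb it), leaves only
\begin{equation*}
\frac{d}{dt}\Vert z(t)\Vert_2^2\leqslant C\Vert z(t)\Vert_2\,\Vert w_{xx}(t)\Vert_2\leqslant C\Vert z(t)\Vert_2\,t^{-5/4},
\end{equation*}
which by Gronwall yields only boundedness of $\Vert z(t)\Vert_2$, not $t^{-1/4}$ decay; there is no coercive term $-c(t)\Vert z\Vert_2^2$ with $\int c(t)\,dt=\infty$, since the nonlocal dissipation degenerates at low frequencies and the viscosity is only $O(\varepsilon)$. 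Interpolating non-decaying bounds gives non-decaying bounds.

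The missing idea --- and what the paper actually does --- is to interpolate the function against its \emph{derivative} rather than against higher $L^q$ norms of itself, via the Gagliardo--Nirenberg inequality $\Vert v\Vert_p\leqslant C\Vert v_x\Vert_\infty^{a}\Vert v\Vert_1^{1-a}$ with $a=(1-1/p)/2$. Applied to $v=\ue-w$, using $\Vert\ue_x(t)\Vert_\infty+\Vert w_x(t)\Vert_\infty\leqslant Ct^{-1}$ (precisely the estimate you prove in Part I, together with Lemma \ref{w_R}) and the logarithmic $L^1$ bound, it gives in one line
\begin{equation*}
\Vert\ue(t)-w(t)\Vert_p\leqslant Ct^{-(1-1/p)/2}[\log(2+t)]^{(1+1/p)/2},
\end{equation*}
and the triangle inequality with $\Vert w(t)-w^R(t)\Vert_p\leqslant Ct^{-(1-1/p)/2}$ finishes the proof; the exponent $(1+1/p)/2$ is just $1-a$, not the outcome of a delicate Gronwall/Duhamel bookkeeping as you anticipate. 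Ironically, you establish exactly the needed derivative decay in Part I and then never use it in Part II. (Minor slip, separately: the forcing in your equation for $z$ should be $\varepsilon w_{xx}+({\cal L}w-w_{xx})$, not $(\varepsilon-1)w_{xx}+({\cal L}w-w_{xx})$; you counted $-w_{xx}$ twice.)
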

 
We proceed the proof of this theorem by proving preliminary inequalities 
involving the nonlocal operator ${\cal L}.$
 
\begin{lemma}\label{Kato}
For every $\varphi\in L^1(\mathbb R)$ we have ${\cal L}\varphi\in L^1(\mathbb R).$ Moreover,
\begin{equation}
\label{zero}
  \int {\cal L} \varphi~dx = 0
\end{equation}
and
\begin{equation}
\label{signum}
 \int {\cal L} \varphi~\sgn\varphi ~dx\leqslant 0.
\end{equation}
\end{lemma}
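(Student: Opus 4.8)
The plan is to treat the three assertions in order, using only Young's convolution inequality, Fubini's theorem, and the nonnegativity and unit mass of $J$. Throughout I write $\mathcal{L}\vp = J*\vp - \vp$ and exploit that $J \geqslant 0$, $\int J = 1$.

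First I would settle integrability. Since $J \in L^1(\R)$ and $\vp \in L^1(\R)$, Young's inequality gives $J*\vp \in L^1(\R)$ with $\Vert J*\vp \Vert_1 \leqslant \Vert J \Vert_1 \Vert \vp \Vert_1 = \Vert \vp \Vert_1$. As $\vp \in L^1(\R)$ as well, the difference $\mathcal{L}\vp = J*\vp - \vp$ lies in $L^1(\R)$, which is all that is needed here.

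Next I would verify \rf{zero}. By Tonelli/Fubini (justified since $J, \vp \in L^1$) one has
\begin{equation*}
\int (J*\vp)(x)\,dx = \int \int J(x-y)\,\vp(y)\,dy\,dx = \int \vp(y)\Big(\int J(x-y)\,dx\Big)dy = \int \vp(y)\,dy,
\end{equation*}
using $\int J = 1$ in the inner integral. Subtracting $\int \vp\,dx$ then yields $\int \mathcal{L}\vp\,dx = 0$.

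For the sign inequality \rf{signum}, the decisive step is to control the convolution term. I would split
\begin{equation*}
\int \mathcal{L}\vp\,\sgn\vp\,dx = \int (J*\vp)\,\sgn\vp\,dx - \int \vp\,\sgn\vp\,dx,
\end{equation*}
noting that $\int \vp\,\sgn\vp\,dx = \Vert \vp \Vert_1$. For the first term, since $|\sgn\vp| \leqslant 1$ and $J \geqslant 0$, Fubini gives
\begin{equation*}
\int (J*\vp)\,\sgn\vp\,dx \leqslant \int \int J(x-y)\,|\vp(y)|\,dy\,dx = \int |\vp(y)|\Big(\int J(x-y)\,dx\Big)dy = \Vert \vp \Vert_1,
\end{equation*}
again by $\int J = 1$. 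Combining the two displays, the two copies of $\Vert \vp \Vert_1$ cancel with the right sign and produce $\int \mathcal{L}\vp\,\sgn\vp\,dx \leqslant 0$. The only point requiring mild care — the ``main obstacle'' such as it is — is ensuring the Fubini interchange is legitimate; this is immediate because the integrand $J(x-y)|\vp(y)|$ is nonnegative and integrable, so Tonelli applies directly. Everything else is a routine application of $J \geqslant 0$ and $\int J = 1$.
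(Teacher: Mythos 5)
Your proof is correct and follows essentially the same route as the paper: Young's inequality for the $L^1$ bound, Fubini/Tonelli with $\int J=1$ for \rf{zero}, and for \rf{signum} the same bound $\left\vert \int (J*\vp)\,\sgn\vp\,dx\right\vert\leqslant\Vert\vp\Vert_1$ cancelled against $\int\vp\,\sgn\vp\,dx=\Vert\vp\Vert_1$. No discrepancies to report.
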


\begin{proof} The function ${\cal L}\varphi$ is integrable by the Young inequality and the following calculation
\begin{equation*}
 \Vert {\cal L} \varphi\Vert_1\leqslant \Vert\varphi\Vert_1+\Vert J*\varphi\Vert_1\leqslant \Vert\varphi\Vert_1(1+\Vert J\Vert_1).
\end{equation*}
Since $\int J(x)~dx=1$, we obtain (\ref{zero}) immediately by applying the Fubini theorem.
\newline\noindent Since ${\cal L}\varphi=J*\varphi-\varphi,$ to prove inequality (\ref{signum}), it is sufficient to use the estimates
\begin{equation*}
\left\vert \int J*\varphi\cdot\sgn \varphi~dx\right\vert\leqslant\int\int J(y)\vert\varphi(x-y)\vert~dxdy=\int \vert \varphi(x)\vert~dx
\end{equation*}
by the Fubini theorem and assumptions (\ref{as J}).
\end{proof}
 
\begin{lemma}\label{convex}
Let $\varphi\in L^1(\mathbb R)$ and $g\in C^2(\mathbb R)$ be a convex function. Then
\begin{equation}
\label{convex inequality}
{\cal L} g(\varphi)\geqslant g'(\varphi){\cal L} \varphi \qquad a.e.
\end{equation}
\end{lemma}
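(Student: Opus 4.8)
The plan is to reduce the claimed inequality to the elementary tangent-line inequality for convex functions, exploiting the normalization $\int J=1$ and the positivity $J\geq 0$ from (\ref{as J}).

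First I would rewrite both sides in symmetric-difference form. Since $\int J(z)\,dz=1$, we have $g(\varphi(x))=\int J(x-y)g(\varphi(x))\,dy$, and therefore
\[
\mathcal{L}g(\varphi)(x)=\int J(x-y)\bigl[g(\varphi(y))-g(\varphi(x))\bigr]\,dy,
\]
while the same manipulation applied to $\varphi$ itself gives $\mathcal{L}\varphi(x)=\int J(x-y)\bigl[\varphi(y)-\varphi(x)\bigr]\,dy$. This is the representation that makes the convexity structure visible.

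Next I invoke convexity. Because $g\in C^2(\mathbb R)$ is convex, its graph lies above each of its tangents, so for all $a,b\in\mathbb R$ one has $g(b)-g(a)\geq g'(a)(b-a)$. Applying this with $a=\varphi(x)$ and $b=\varphi(y)$ yields, for a.e.\ $x$ and every $y$,
\[
g(\varphi(y))-g(\varphi(x))\geq g'(\varphi(x))\bigl(\varphi(y)-\varphi(x)\bigr).
\]
Multiplying by the nonnegative weight $J(x-y)$ and integrating in $y$ preserves the inequality — this is precisely where the sign condition $J\geq 0$ is essential — and produces
\[
\mathcal{L}g(\varphi)(x)\geq g'(\varphi(x))\int J(x-y)\bigl[\varphi(y)-\varphi(x)\bigr]\,dy=g'(\varphi(x))\,\mathcal{L}\varphi(x),
\]
which is exactly (\ref{convex inequality}).

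The only point requiring real care is integrability: I need the integrals above to be finite for a.e.\ $x$ so that the rearrangement and the passage of the inequality under the integral sign are legitimate. By Lemma \ref{Kato} we already have $\mathcal{L}\varphi\in L^1$, and in the application of this lemma the argument $\varphi$ is bounded (it is a spatial derivative of the bounded regularized solution $\ue$), so $g(\varphi)$ and $g'(\varphi)$ are bounded and $\mathcal{L}g(\varphi)$ is well-defined by the Young inequality exactly as in the proof of Lemma \ref{Kato}. I therefore expect no substantive obstacle beyond this bookkeeping: once the symmetric-difference representation is established, the statement is an immediate consequence of the tangent-line inequality combined with $J\geq 0$.
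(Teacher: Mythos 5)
Your proof is correct and takes essentially the same route as the paper: the paper's proof likewise applies the tangent-line inequality $g(\varphi(x-y))-g(\varphi(x))\geqslant g'(\varphi(x))[\varphi(x-y)-\varphi(x)]$ and then multiplies by $J(y)\geqslant 0$ and integrates in $y$, which (using $\int J=1$) is exactly your symmetric-difference representation. Your closing remarks on integrability are harmless extra bookkeeping that the paper omits.
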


\begin{proof}
 The convexity of the function $g$ leads to the following inequality
\begin{equation*}
 g(\varphi(x-y))-g(\varphi(x))\geqslant g'(\varphi(x))[\varphi(x-y)-\varphi(x)].
\end{equation*}
Multiplying this inequality by $J(y)$ and integrating it with respect to $y$ over $\mathbb R$ we obtain the inequality (\ref{convex inequality}).
\end{proof}

For simplicity of the exposition,
we first formulate some auxiliary lemmas.
We start with known results concerning the initial value problem for the viscous Burgers equation (\ref{eq-app})-(\ref{ini-app}).
The following estimates can be deduced from the explicit formula for solutions to the problem (\ref{eq-app})-(\ref{ini-app}).
We refer the reader to \cite{HN} for detailed calculations, and for additional improvements to \cite{KT}.

\begin{lemma}\label{w_R}
Problem (\ref{eq-app})-(\ref{ini-app}) with  $u_-<u_+$ has the unique solution $w(x,t)$ satisfying $u_-<w(t,x)<u_+$
and  $w_x(t,x)>0$ for all $(x,t)\in\R\times(0,\infty)$.

Moreover, for every $p\in[1,\;\infty]$, there is a constant
$C=C(p,u_-,u_+)>0$ such that
\begin{equation*}
\|w_x(t)\|_{p}\leq C t^{-1+1/p},
\quad
\|w_{xx}(t)\|_{p}\leq C t^{-3/2+1/(2p)}
\end{equation*}
and
\begin{equation*}
\|w(t)-w^R(t)\|_{p}\leq C t^{-(1-1/p)/2},
\end{equation*}
for all $t>0$, where $w^R(x,t)$ is the rarefaction wave
(\ref{rarefaction}).
\end{lemma}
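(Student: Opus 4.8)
The plan is to linearize the viscous Burgers equation (\ref{eq-app}) by the Cole--Hopf transformation and then read off every stated property from the resulting explicit heat-kernel representation. Concretely, writing $w=W_x$ and integrating (\ref{eq-app}) once in $x$ gives $W_t-W_{xx}+\tfrac12 W_x^2=0$ (the $x$-independent integration constant can be normalized away), and the substitution $\phi=e^{-W/2}$ turns this into the heat equation $\phi_t=\phi_{xx}$. The step datum (\ref{ini-app}) corresponds, after choosing $W(0,0)=0$, to $W_0(x)=u_-x$ for $x<0$ and $W_0(x)=u_+x$ for $x>0$, hence to the strictly positive initial datum $\phi_0(x)=e^{-W_0(x)/2}$ of at most exponential growth. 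Convolving with the Gaussian kernel $G(x,t)=(4\pi t)^{-1/2}e^{-x^2/(4t)}$ yields $\phi(x,t)=(G(\cdot,t)*\phi_0)(x)>0$, so $W=-2\log\phi$ is smooth for $t>0$ and $w=-2\phi_x/\phi$ is the unique smooth solution of (\ref{eq-app})--(\ref{ini-app}).

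The qualitative statements then follow directly. The bounds $u_-\le w\le u_+$ come from the parabolic comparison principle, since the constants $u_\pm$ solve (\ref{eq-app}); strict inequality for $t>0$ is given by the strong maximum principle. For monotonicity, $w_0$ is nondecreasing, so its distributional derivative is the nonnegative measure $(u_+-u_-)\delta_0$; differentiating the Cole--Hopf formula shows that $w_x(\cdot,t)$ is a strictly positive smooth function for each $t>0$ (equivalently, $v=w_x$ solves $v_t-v_{xx}+(wv)_x=0$ with nonnegative initial data, so $v>0$ for $t>0$ by the strong maximum principle).

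For the decay estimates I would combine exact identities with scaling and $L^p$-interpolation, proving the $p=1$ and $p=\infty$ endpoints and filling in the rest. Since $w$ is increasing and connects $u_-$ to $u_+$, one has the exact identity $\|w_x(t)\|_1=\int w_x\,dx=u_+-u_-$ for all $t>0$, the $p=1$ endpoint. The $p=\infty$ endpoint $\|w_x(t)\|_\infty\le C t^{-1}$ is read from the explicit formula: the transition of $w$ from $u_-$ to $u_+$ occurs across the expanding rarefaction fan where $u_-t\le x\le u_+t$, on which $w_x$ is of order $t^{-1}$. Interpolation gives $\|w_x(t)\|_p\le C t^{-1+1/p}$. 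The bound on $w_{xx}$ is obtained the same way, the gain of one further power of $t^{-1/2}$ reflecting the parabolic smoothing on the boundary-layer scale $\sqrt t$ near the two corners $x=u_\pm t$ of the fan. Finally, $w^R$ and $w$ agree to leading order both inside and outside the fan and differ only in these two layers, each of width $O(\sqrt t)$ and of height $O(t^{-1/2})$; this yields the endpoints $\|w-w^R\|_\infty\le C t^{-1/2}$ and $\|w-w^R\|_1\le C$, and interpolation produces $\|w-w^R\|_p\le C t^{-(1-1/p)/2}$.

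The main technical obstacle is the sharp boundary-layer analysis near the corners $x=u_\pm t$: extracting the $L^\infty$ bounds on $w_x$ and especially on $w_{xx}$, together with the precise width and height of the transition layers that control $w-w^R$, requires a careful Laplace-type asymptotic evaluation of the Cole--Hopf integrals, since away from the corners the integrand is sharply peaked and the naive estimates are not sharp. These computations are carried out in detail in \cite{HN, KT}, and I would follow them to close the argument.
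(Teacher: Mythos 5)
Your proposal is correct and follows essentially the same route as the paper: the paper gives no proof of Lemma \ref{w_R}, stating only that the estimates ``can be deduced from the explicit formula for solutions'' and citing \cite{HN} (with improvements in \cite{KT}), and that explicit formula is precisely the Cole--Hopf representation you construct, with the sharp corner-layer asymptotics deferred to the same references. Your sketch of the comparison-principle bounds, the exact $L^1$ identity $\|w_x(t)\|_1=u_+-u_-$, and the interpolation between the $p=1$ and $p=\infty$ endpoints is a sound outline of the argument carried out in detail in \cite{HN, KT}.
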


Our goal is to estimate $\Vert \ue(t)-w(t)\Vert_{p}$ where $\ue=\ue(x,t)$ is a solution of regularized problem 
(\ref{rownanie zregularyzowane})-(\ref{warunek ue}) and $w=w(x,t)$ is a smooth approximation of the rarefaction wave $w^R.$
First, we deal with the $L^1$-norm.

\begin{lemma}\label{log}
Assume that $\ue=\ue(x,t)$ is a solution of problem (\ref{rownanie zregularyzowane})-(\ref{warunek ue}) 
from Theorem \ref{istnienie zregularyzowanego}.
Let $w=w(x,t)$ be the smooth approximation of a rarefaction wave. Then, there exists a constant $C>0$ independent of $t$ and 
of $\varepsilon>0$ such that
\begin{equation*}
\label{log_nierownosc}
 \Vert \ue(t)-w(t)\Vert_1\leqslant C\log (2+t)\qquad \text{for all} \quad t>0.
\end{equation*}
\end{lemma}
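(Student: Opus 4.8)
The plan is to run the entropy (duality with the sign function) method on the difference $z=\ue-w$, reducing the claim to a differential inequality for $\Vert z(t)\Vert_1$ whose right-hand side is integrable in time away from $t=0$. First I would record that $z(\cdot,0)=u_0-w_0\in L^1(\R)$: on $(-\infty,0)$ one has $z(\cdot,0)=u_0-u_-\in L^1$ and on $(0,\infty)$ one has $z(\cdot,0)=u_0-u_+\in L^1$ by (\ref{as u01}), and the same integrability propagates to $z(t)\in L^1(\R)$ for $t>0$. Using $-\ue\ue_x+ww_x=-\tfrac12\big((\ue+w)z\big)_x$ together with $\ue_{xx}=z_{xx}+w_{xx}$ and ${\cal L}\ue={\cal L}z+{\cal L}w$, the difference solves
\begin{equation*}
z_t=\varepsilon z_{xx}+{\cal L}z+\big[(\varepsilon-1)w_{xx}+{\cal L}w\big]-\tfrac12\big((\ue+w)z\big)_x .
\end{equation*}

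Next I would multiply this identity by a smooth odd approximation $\sgn_\delta$ of the signum (with $\sgn_\delta'\ge 0$), integrate over $\R$, and let $\delta\to 0$. The local diffusion contributes $\int \varepsilon z_{xx}\,\sgn_\delta(z)\,dx=-\varepsilon\int \sgn_\delta'(z)\,z_x^2\,dx\le 0$; the nonlocal diffusion contributes $\int {\cal L}z\,\sgn(z)\,dx\le 0$ directly from the signum inequality (\ref{signum}) of Lemma \ref{Kato} applied to $\varphi=z(t)\in L^1$. The conservation-law term vanishes in the limit: integrating by parts turns it into $\int \tfrac12(\ue+w)_x\,\Theta_\delta(z)\,dx$ with $\Theta_\delta(z)=\int_0^z s\,\sgn_\delta'(s)\,ds=z\,\sgn_\delta(z)-\int_0^z\sgn_\delta(s)\,ds$, and since $|\Theta_\delta(z)|\le 2|z|\in L^1$ while $\Theta_\delta(z)\to|z|-|z|=0$ pointwise (and $(\ue+w)_x$ is bounded), dominated convergence annihilates it. Collecting these facts gives
\begin{equation*}
\frac{d}{dt}\Vert z(t)\Vert_1\le \big\Vert (\varepsilon-1)w_{xx}(t)+{\cal L}w(t)\big\Vert_1 .
\end{equation*}

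Then I would estimate the source. Lemma \ref{w_R} gives $\Vert w_{xx}(t)\Vert_1\le Ct^{-1}$, and a second-order Taylor estimate using the symmetry of $J$ and $\int x^2J\,dx<\infty$ from (\ref{as J}) yields $\Vert{\cal L}w(t)\Vert_1\le\tfrac12\big(\int x^2J\,dx\big)\Vert w_{xx}(t)\Vert_1\le Ct^{-1}$. Restricting to $\varepsilon\in(0,1]$ (the range relevant for the vanishing-viscosity limit used to prove Theorem \ref{th main}), so that $|\varepsilon-1|\le 1$, we obtain $\Vert(\varepsilon-1)w_{xx}+{\cal L}w\Vert_1\le Ct^{-1}$ for all $t>0$, with $C$ independent of $\varepsilon$. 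Integrating the differential inequality over $[1,t]$ then produces $\Vert z(t)\Vert_1\le\Vert z(1)\Vert_1+C\int_1^t s^{-1}\,ds=\Vert z(1)\Vert_1+C\log t\le C\log(2+t)$ for $t\ge 1$.

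Finally, the interval $(0,1]$ must be handled separately, and this is where I expect the genuine difficulty to lie: the bound $\Vert w_{xx}(t)\Vert_1\le Ct^{-1}$ is \emph{not} integrable at $t=0$, reflecting the initial layer created by the jump in $w_0$ and the mismatch between the vanishing viscosity $\varepsilon$ of $\ue$ and the unit viscosity of $w$, so the time integration above breaks down near $0$. Instead one needs a direct uniform-in-$\varepsilon$ bound $\sup_{0<t\le 1}\Vert z(t)\Vert_1\le C$, which I would extract from $\Vert z(t)\Vert_1\le\Vert \ue(t)-u_0\Vert_1+\Vert u_0-w_0\Vert_1+\Vert w(t)-w_0\Vert_1$: the middle term is the fixed finite constant noted above, $\Vert w(t)-w_0\Vert_1\le C\sqrt t$ follows from the explicit viscous-Burgers representation underlying Lemma \ref{w_R}, and $\Vert \ue(t)-u_0\Vert_1\le C$ uniformly in $\varepsilon$ can be read off the Duhamel formula (\ref{duhamel}) together with the uniform $L^\infty$ bound (\ref{L}) and the derivative conservation $\Vert\ue_x(t)\Vert_1=\Vert u_{0,x}\Vert_1$ from (\ref{prawo zach}). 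Combining the two regimes yields $\Vert \ue(t)-w(t)\Vert_1\le C\log(2+t)$ for all $t>0$, with $C$ independent of $t$ and of $\varepsilon$.
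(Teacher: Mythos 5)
Your proposal is correct and follows the same core strategy as the paper's own proof: form the difference $z=u^\varepsilon-w$, multiply its equation by a smooth nondecreasing approximation of the signum, kill the nonlocal term with inequality (\ref{signum}) of Lemma \ref{Kato}, dispose of the convection term by integration by parts, reduce $\mathcal{L}w-w_{xx}$ to $C\Vert w_{xx}\Vert_1$ via a second-order Taylor expansion using the symmetry and finite second moment of $J$, and integrate the resulting differential inequality against $\Vert w_{xx}(t)\Vert_1\leqslant Ct^{-1}$. However, two of your refinements go beyond what the paper writes and in fact repair defects in its argument. First, the paper's displayed equation for $v^\varepsilon=u^\varepsilon-w$ omits the viscous contributions $\varepsilon v^\varepsilon_{xx}$ and $\varepsilon w_{xx}$ altogether; you keep them and check that they are harmless (the first has a favorable sign after multiplication by $\sgn_\delta$, the second is absorbed into $C\Vert w_{xx}\Vert_1$ once $\varepsilon\leqslant 1$), which is how the computation should read. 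Second, and more substantively, the paper concludes by integrating $\frac{d}{dt}\Vert v^\varepsilon(t)\Vert_1\leqslant C\Vert w_{xx}(t)\Vert_1\leqslant Ct^{-1}$ ``after integration'' with no further comment, even though $t^{-1}$ is not integrable at $t=0$: making this rigorous requires either the sharper small-time bound $\Vert w_{xx}(t)\Vert_1\leqslant Ct^{-1/2}$ (available from the Hattori--Nishihara estimates but not stated in Lemma \ref{w_R}) or precisely the kind of separate uniform-in-$\varepsilon$ bound for $\sup_{0<t\leqslant 1}\Vert z(t)\Vert_1$ that you supply from Duhamel's formula, the $L^\infty$ bound (\ref{L}) and the conservation law (\ref{prawo zach}); your identification of this as the genuine difficulty is exactly right. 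Your treatment of the convection term (showing it vanishes as $\delta\to 0$ by dominated convergence, instead of the paper's observation that it is nonnegative because $w_x\geqslant 0$ and $\Phi\geqslant 0$) is an equivalent, equally valid variant of the same integration by parts. In short: same method, but your version is the complete one.
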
 

\begin{proof}
The function $\ve(x,t)=\ue(x,t)-w(x,t)$ satisfies the following equation
\begin{equation*}
 \ve_t-{\cal L} \ve+\left(\frac{(\ve)^2}{2}+\ve w\right)_x={\cal L} w-w_{xx}.
\end{equation*}
We multiply it by $\sgn \ve$ and we integrate over $\mathbb R$ to obtain
\begin{equation}
\label{*}
\begin{split}
  \frac{d}{dt}\int \vert \ve\vert ~dx-\int {\cal L} \ve ~\sgn \ve dx&+ \frac{1}{2}\int \left((\ve)^2+2\ve w\right)_x ~\sgn \ve dx\\
&=\int ({\cal L} w-w_{xx})~\sgn \ve dx.
\end{split}
\end{equation}

\noindent By Lemma \ref{Kato}, the second
term on the left-hand side of (\ref{*}) is non-negative. 
For the third term, we approximate the
sgn function by a smooth and nondecreasing function
$\varphi=\varphi(x)$.
Thus, we obtain
\begin{align*}
\int [(\ve)^2+2\ve w]_x \varphi(\ve) dx
&=-\int ((\ve)^2+2\ve w)\varphi'(\ve)\ve_x \;dx\\
&=-   \int \Psi(\ve)_x\; dx+
\int w_x \Phi(\ve)\; dx,
\end{align*}
where $\Psi(s)=\int_0^sz^2\varphi'(z)\,dz$ and
$\Phi(s)=\int_0^s2z\varphi'(z)\,dz$.
Here, the first term on the right hand side equals zero and
the second one is nonnegative
because  $w_x\geq 0$ and
$\Phi(s)\geq 0$ for all $s\in \R$.
Hence, an approximation argument gives
$\int[(\ve)^2+~2\ve w]_x \sgn \ve \,dx\geq 0$. 

%\begin{split}
%\int ((\ve)^2+2\ve w)_x ~\sgn \ve ~dx &=\int (\vert \ve\vert^2)_x~dx+2\int \vert \ve\vert w_x~dx+2\int(\vert \ve\vert_x)w~ dx=
%\\
%&=2\int \vert \ve\vert w_x~dx +2\vert \ve\vert w\vert^{+\infty}_{-\infty}-2\int \vert \ve\vert w ~dx
%\end{split}
\noindent Now, we estimate the term on the rght hans side of (\ref{*}).
First, we notice that using the Taylor formula, we have
\begin{equation*}
\begin{split}
  {\cal L} w(x,s)&=(J*w-w)(x,s)=\int J(y)[w(x-y,s)-w(x,s)]dy\\
&=\int J(y) y w_x(x,s)~dy+ \int J(y) \frac{y^2}{2} w_{xx}(x+\theta y,s)~dy, 
\end{split}
\end{equation*}
where $\int J(y) y w_x(x,s) dy=w_x(x,s)\int J(y) y dy=0$ by the symmetry assumption from (\ref{as J}).
Therefore, by assumption (\ref{as u01}), we can estimate the integral
 on the right-hand side of (\ref{*}) as follows
\begin{equation*}
\begin{split}
\left\vert \int ({\cal L} w-w_{xx})~\sgn \ve~dx\right\vert&\leqslant \int J(y) \frac{y^2}{2} \int \vert w_{xx}(x+\theta y,s)\vert~dxdy+\Vert w_{xx}\Vert_1\\
&\leqslant C \Vert w_{xx}\Vert_1.
\end{split}
\end{equation*}
Consequently, applying these estimates to inequality (\ref{*}) we obtain the following differential inequality
\begin{equation}
\label{**}
\frac{d}{dt} \Vert \ve (t)\Vert_1\leqslant C \Vert w_{xx}(t)\Vert_1.
\end{equation}
Now, by Lemma \ref{w_R}, we have the inequality $\Vert w_{xx}(t)\Vert_1\leqslant Ct^{-1}$ for all $t>0,$ which combined with
(\ref{**}) ater integration completes the proof of Lemma \ref{log}.
\end{proof}

Now, we are in a position to prove the convergence of regularized solutions towards rarefaction wave.

\bigskip
\noindent
{\it Proof of Theorem \ref{jeszcze nie wiem jaki label}} {\it Part I. Decay estimates.}

In the case $p=1$, we use the equality (\ref{prawo zach}) from Theorem \ref{wlasnosci rozwiazania}.
Since $\ue_x\geqslant 0,$ we have 
\begin{equation}
\label{-}
 \Vert \partial_x \ue(t)\Vert_1=\Vert \partial_x u_0\Vert_1 \qquad \text{for all} \quad t\geqslant 0.
\end{equation}

In order to show the inequality (\ref{decay zregularyzowanego}) for $p\in(1,\infty),$ we multiply equation
(\ref{+}) by $(\ue_x)^{p-1},$ and 
integrate the resulting equation over $\mathbb R$ to obtain
\begin{equation}
\label{!!}
\begin{split}
 \frac{1}{p}\frac{d}{dt} \int (\ue_x)^p dx&=\varepsilon\int \ue_{xx}(\ue_x)^{p-1}dx+\int (\ue_x)^{p-1}{\cal L} \ue_x dx\\
&-\int \left((\ue_x)^2+\ue \ue_x\right)(\ue_x)^{p-1} dx.
\end{split}
\end{equation}
The first integral on the right-hand side of (\ref{!!}) 
is equal to $\frac{\varepsilon}{p}\int \left[(\ue_x)^p\right]_xdx.$ Thus, since $\ue_x\in L^1(\mathbb R),$ this term equals zero.
The second integral on the right-hand side of (\ref{!!}) is non-positive 
by inequalities (\ref{convex inequality}) and (\ref{zero}) as well as by the assumptions on the kernel of the operator ${\cal L}$ from (\ref{as J}).
Thus, since $\ue_x$ is integrable and nonnegative, after the following calculations involving the integration by part on the third integral of the right-hand side of (\ref{!!})
\begin{equation*}
\int \left((\ue_x)^2+\ue \ue_x\right)(\ue_x)^{p-1} dx=\int (\ue_x)^{p+1} dx+\int \ue \left(\frac{(\ue_x)^p}{p}\right)_x dx=\left(1-\frac{1}{p}\right)\int(\ue_x)^{p+1}
\end{equation*}
we arrive at inequality
\begin{equation}
\label{A}
 \frac{1}{p}\frac{d}{dt} \Vert \ue_x(t) \Vert_p^p\leqslant -\left(1-\frac{1}{p}\right)\Vert \ue_x(t) \Vert_{p+1}^{p+1}.
\end{equation}
Combining inequality (\ref{A}) with the interpolation inequality 
\begin{equation*}
 \Vert \ue_x(t) \Vert_p^\frac{p^2}{p-1}\leqslant \Vert \ue_x(t) \Vert_{p+1}^{p+1}\Vert \ue_x(t) \Vert_1^\frac{1}{p-1}
\end{equation*}
and with the conservation of the $L^1$-norm in (\ref{-}) we obtain
the following differential inequality 
\begin{equation}
\label{nierownosc rozniczkowa}
 \frac{d}{dt} \Vert \ue_x(t) \Vert_p^p\leqslant -(p-1)\left(  \Vert \ue_x(t) \Vert_p^p\right)^\frac{p}{p-1}  \Vert u_{0,x}(t) \Vert_1^{-\frac{1}{p-1}}.
\end{equation}
Consequently, decay estimates (\ref{decay zregularyzowanego}) result from inequality (\ref{nierownosc rozniczkowa}) by standard calculations.

We obtain immediately the case of $p=\infty$ in inequality (\ref{decay zregularyzowanego}) by passing to the limit $p\to\infty.$

\noindent {\it Part II. Convergence towards rarefaction wave.} 

First, we recall that by Lemma \ref{w_R} the large time
asymptotics of $w(t)$ is described in $L^p(\R)$ by the rarefaction wave $w^R(t)$
and the rate of this convergence is $t^{-1/2(1-1/p)}$. 
Thus, it is enough to estimate $L^p$-norm of the difference of the solution $\ue$ of problem (\ref{rownanie zregularyzowane})-(\ref{warunek ue})
and of the smooth approximation of the rarefaction wave satisfying (\ref{eq-app})-(\ref{ini-app}).
To this end, using the following Gagliardo-Nirenberg-Sobolev inequality
\begin{equation*}
\label{GN}
\|v\|_p\leq C\|v_x\|^a_\infty\|v\|^{1-a}_1,
\end{equation*}
valid for every $1<p\leq \infty$ and for $a=1/2(1-1/p),$
inequality (\ref{decay zregularyzowanego}), and Lemma \ref{w_R} 
we have
\begin{align*}
\| \ue(t)-w(t)\|_p&
\leq C(\| \ue_x(t)\|_\infty+\|w_x(t)\|_\infty)^a\|
\ue(t)-w(t)\|_1^{1-a}
\\
&\leq Ct^{-a}\| \ue(t)-w(t)\|^{1-a}_1.
\end{align*}
Finally, the logaritmic estimate of the $L^1$-norm from Lemma \ref{log} completes the proof.
\endProof

%%%%%%%%%%%%%%%%%%%%%%%%%%%%%%%%%%%%%%%%%%%%%%%%%%%%%%%%%%%%%%%%%%%%%%
%%%%%%%%%%%%%%%%%%%%%%%%%%%%%%%%%%%%%%%%%%%%%%%%%%%%%%%%%%%%%%
%%%%%%%%%%%%%%%%%%%%%%%%%%%%%%%%%%%%%%%%%%%%%%%%%%%%%%%%%%%%%%%%%%%%%%
%%%%%%%%%%%%%%%%%%%%%%%%%%%%%%%%%%%%%%%%%%%%%%%%%%%%%%%%%%%%%%%%%%%%

\section{Passage to the limit $\varepsilon\to 0$}

\setcounter{equation}{0}
Here, we prove a result on the convergence as  $\varepsilon\to 0$ of solutions $\ue$ for regularized problem 
(\ref{rownanie zregularyzowane})-(\ref{warunek ue}) towards a weak solution to problem (\ref{rownanie})-(\ref{warunek}).

\begin{theorem}
 \label{zALS}
Let the assumptions on the initial data $u_0$ and the kernel $J$ from (\ref{as J})-(\ref{as u02}) hold true and let $\ue=\ue(x,t)$ be a solution to problem (\ref{rownanie zregularyzowane})-(\ref{warunek ue}) with $\varepsilon>0.$
Then, there exists a sequence $\varepsilon_n\to 0$ such that $u^{\varepsilon_n}\to u$ in $C([t_1,t_2],L^1_{loc}(\mathbb R)),$ 
for every $t_2>t_1>0,$ as well as  $u^{\varepsilon_n}\to u$ {\it a.e.} in $\mathbb R\times (0,\infty),$ where $u$ is a weak solution of problem (\ref{rownanie})-(\ref{warunek}).
\end{theorem}

In the proof of this theorem, the following version of the Aubin-Lions-Simon compactness theorem will be used.

\begin{theorem}
 \label{ALS}
Let $T > 0, 1 < p \leqslant\infty,$ and $1 \leqslant q \leqslant\infty.$ 
Assume that $Y\subset X\subset Z$ are Banach spaces such that $Y$ is compactly embedded in $X$
and $X$ is continuously embedded in $Z$. If $A$ is a bounded subset of $W^{1,p}([0,T],Z)$ and
of $L^q([0,T],Y),$ then $A$ is relatively compact  in $L^q([0,T],X)$
and in  $C([0,T],X)$ if $q=\infty$.
\end{theorem}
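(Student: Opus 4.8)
The plan is to deduce the theorem from two classical compactness criteria, feeding each with two analytic inputs extracted from the hypotheses. The criteria are the vector-valued Fr\'echet--Kolmogorov (Riesz) theorem characterizing relatively compact subsets of $L^q([0,T],X)$ for $q<\infty$, and the Arzel\`a--Ascoli theorem characterizing relatively compact subsets of $C([0,T],X)$ for $q=\infty$. The two inputs are: (a) \emph{Ehrling's interpolation inequality}, which converts the compact embedding $Y\hookrightarrow\hookrightarrow X$ into a quantitative estimate; and (b) a \emph{time-translation estimate} in $Z$ coming from the bound in $W^{1,p}([0,T],Z)$ and using crucially $p>1$. Throughout, let $M$ be a common bound for the norms of elements of $A$ in $L^q([0,T],Y)$ and in $W^{1,p}([0,T],Z)$.

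First I would establish the two inputs. For (a), since $Y$ is compactly embedded in $X$ and $X$ continuously in $Z$, for every $\eta>0$ there is $C_\eta>0$ with
\[
\|v\|_X\le \eta\|v\|_Y+C_\eta\|v\|_Z\qquad\text{for all }v\in Y.
\]
This is proved by contradiction: a sequence violating the inequality for all $C_\eta=n$ can be normalized to $\|v_n\|_X=1$, is then bounded in $Y$ (hence $X$-convergent along a subsequence) and $Z$-null, forcing its $X$-limit to vanish and contradicting $\|v_n\|_X=1$. For (b), any $u\in W^{1,p}([0,T],Z)$ has an absolutely continuous representative with $u(t+h)-u(t)=\int_t^{t+h}u'(s)\,ds$, so H\"older's inequality gives
\[
\|u(t+h)-u(t)\|_Z\le h^{1-1/p}\,\|u'\|_{L^p([0,T],Z)}\le M\,h^{1-1/p}.
\]
Here $p>1$ guarantees $1-1/p>0$, so this is a genuine modulus of continuity in $Z$, uniform over $A$, tending to $0$ as $h\to0$.

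For $q<\infty$ I would verify the two Fr\'echet--Kolmogorov conditions. For the time-averages, H\"older in time gives $\bigl\|\int_{t_1}^{t_2}u(t)\,dt\bigr\|_Y\le (t_2-t_1)^{1-1/q}M$, so $\{\int_{t_1}^{t_2}u\,dt:u\in A\}$ is bounded in $Y$ and hence, by the compact embedding, relatively compact in $X$. For the translations, I apply Ehrling to $u(t+h)-u(t)$ and integrate the $q$-th power in time:
\[
\|\tau_h u-u\|_{L^q([0,T-h],X)}\le \eta\,\|\tau_h u-u\|_{L^q([0,T-h],Y)}+C_\eta\,\|\tau_h u-u\|_{L^q([0,T-h],Z)}.
\]
The first term is at most $2M\eta$ from the $L^q([0,T],Y)$ bound, the second at most $C_\eta\,T^{1/q}M\,h^{1-1/p}$ by input (b); choosing $\eta$ small and then $h$ small makes the right-hand side arbitrarily small uniformly in $u\in A$. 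The two conditions give relative compactness of $A$ in $L^q([0,T],X)$.

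For $q=\infty$ I would instead verify Arzel\`a--Ascoli in $C([0,T],X)$. Boundedness in $L^\infty([0,T],Y)$ places $u(t)$, for a.e.\ $t$, in $\{\|v\|_Y\le M\}$; its $X$-closure $K$ is $X$-compact (compact embedding) and thus $Z$-closed (continuous embedding). Since the $W^{1,p}$ representative is continuous into $Z$ and agrees a.e.\ with a map valued in $K$, its values lie in $K$ for \emph{every} $t$, yielding pointwise relative compactness with a single fixed compact set. Equicontinuity follows once more from Ehrling: $\|u(t)-u(s)\|_X\le 2M\eta+C_\eta M\,|t-s|^{1-1/p}$, uniformly small for $|t-s|$ small after fixing $\eta$. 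Arzel\`a--Ascoli then gives relative compactness in $C([0,T],X)$, and since the $C$-topology is stronger than the $L^\infty$-topology, also in $L^\infty([0,T],X)$. The main obstacle is not any single estimate but the packaging: precisely invoking (or reproving) the vector-valued Fr\'echet--Kolmogorov criterion, and in the $q=\infty$ case the passage from ``a.e.\ values in $K$'' to ``all values in $K$'' for the continuous representative; the role of $p>1$ in making $h^{1-1/p}$ a nontrivial modulus is the other point that must not be overlooked.
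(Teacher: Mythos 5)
Your proof is correct, and it is essentially the argument of the source the paper itself relies on: the paper offers no proof of this theorem but delegates it to Simon \cite{Si}, whose proof likewise runs through an Ehrling-type interpolation inequality $\Vert v\Vert_X\leqslant \eta\Vert v\Vert_Y+C_\eta\Vert v\Vert_Z$, the uniform translation estimate $\Vert u(t+h)-u(t)\Vert_Z\leqslant M h^{1-1/p}$ (where $p>1$ is exactly what makes this a modulus), and the vector-valued Fr\'echet--Kolmogorov characterization of relatively compact sets of $L^q([0,T],X)$ (Simon's Theorem~1), with Arzel\`a--Ascoli handling $q=\infty$. The only step worth executing rather than flagging is the $q=\infty$ upgrade from a.e.\ to all $t,s$: since all values of the continuous-into-$Z$ representative lie in the $X$-compact set $K$, the identity from $(K,\Vert\cdot\Vert_X)$ to $(K,\Vert\cdot\Vert_Z)$ is a continuous bijection from a compact space to a Hausdorff space, hence a homeomorphism, so the representative is automatically $X$-continuous and your Ehrling equicontinuity bound, valid on a full-measure set of pairs, extends to all pairs by density --- after which your argument is complete.
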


The proof of Theorem \ref{ALS} can be found in \cite{Si}.

\bigskip
\noindent
{\it Proof of Theorem \ref{zALS}.} First, we show the relative compactness 
of the family $\mathcal{F}=\{ \ue : \varepsilon\in(0,1]\}$ in the space $C((0,+\infty),L^1_{loc}(\mathbb R)),$ 
and next, we pass to the limit $\varepsilon\to 0,$ using 
the Lebesgue dominated convergence theorem. 

\noindent {\it Step} 1. 
We check the assumptions of the Aubin-Lions-Simon theorem in the case $p=q=\infty,$ $Y=W^{1,1}(K),$ $A={\bf 1}_{K\times [t_1,t_2]}\cal{F},$
$X=L^1(K)$ and $Z=(C^2_K)^*,$ with arbitrary $t_2>t_1>0,$ where $K\subset\mathbb R$ is a compact set and $(C^2_K)^*$ is topological dual space to the space of
$C^2$ functions with compact support in $K$ (with its natural norm).
First, we notice that $L^1(K)$ is obviously continuously embedded in $(C^2_K)^*,$ and by the Rellich-Kondrachov theorem 
$W^{1,1}(K)$ is compactly imbedded in $L^1(K).$ 
By inequality (\ref{L}), we have
\begin{equation*}
\left\vert \int_K \ue(t)\vp dx\right\vert\leqslant\Vert\vp\Vert_{C^2_K}\Vert u_0\Vert_\infty\vert K\vert
\end{equation*}
for every $\vp\in C^\infty_c (\mathbb R).$ Hence, the family $\cal{F}$ is bounded in $L^\infty([t_1,t_2],(C^2_K)^*).$
Now, we check that $\{\ue_t\}$ is bounded in 
$L^\infty([t_1,t_2],(C^2_K)^*).$ To this end, we multiply equation (\ref{rownanie zregularyzowane})
by $\vp\in C^\infty_c (\mathbb R)$ and integrate over $\mathbb R.$ Applying integrating by part formula we have the following estimate
\begin{equation}
\label{,}
 \left\vert \int_K \ue_t(t)\vp dx\right \vert\leqslant \Vert\vp\Vert_{C^2_K}\left(\varepsilon\int_K \vert\ue\vert dx+\int_K \vert{\cal L} \ue\vert dx+\int_K {(\ue)}^2dx\right).
\end{equation}
By assumption imposed on the kernel $J$ in (\ref{as J}), the Young inequality, and inequality (\ref{L}),
the right-hand side of inequality (\ref{,}) can be estimated by $\Vert\vp\Vert_{C^2_K}\Vert u_0\Vert_\infty|K|\left(\Vert J\Vert_1+1+\varepsilon+\frac{1}{2}\Vert u_0\Vert_\infty\right).$
\newline\indent Now, again  by inequality (\ref{L}) we have
$\int_K\vert\ue(t)\vert dx\leqslant\Vert u_0\Vert_\infty\vert K\vert.$ Moreover, from decay estimate (\ref{decay zregularyzowanego}) for $p=\infty$ we obtain
$\int_K\vert\ue_x(t)\vert dx\leqslant\frac{1}{t_1}\vert K\vert.$ All these estimates imply
that $\cal{F}$ is bounded in $L^\infty([t_1,t_2],W^{1,1}(K)).$ Thus, the Aubin-Lions-Simon theorem ensures that 
$\cal{F}$ is relatively compact in $C([t_1,t_2],L^1(K))$ for all $t_2>t_1>0,$ and all compact sets $K\subset\mathbb R.$

\noindent{\it Step} 2. 
We deduce from Step 1 and from the Cantor diagonal argument that there exists a sequence $\varepsilon_n\to 0$ and a function 
$u\in C((0,+\infty),L^1_{loc}(\mathbb R))$ such that $u^{\varepsilon_n}$ converges as $\varepsilon_n\to 0$ towards  
in $C([t_1,t_2 ],L^1(K))$ for all $t_2>t_1>0,$ and all compact $K\subset\mathbb R.$ 
Up to another subsequence, we can also assume that $u^{\varepsilon_n}\to u$ {\it a.e.} on $\mathbb R\times (0,\infty).$  
This convergence and inequality (\ref{L})
implies that $u\in L^\infty(\mathbb R\times (0,+\infty)).$
\newline\indent Now, we prove that a function $u$ is a weak solution of the problem (\ref{rownanie})-(\ref{warunek}).
To this end, we multiply equation (\ref{rownanie}) by $\vp\in C^\infty_c(\mathbb R\times [0,\infty)),$ 
and integrating the resulting equation over $\mathbb R\times [0,\infty)$ and integrating by parts, we obtain
\begin{equation}
\begin{split}
\label{ue_slabe_2}
&-\int_\mathbb R\int_0^\infty u^{\varepsilon_n}\vp_t dtdx-\int_\mathbb R u_0(x)\vp(x,0)~dx= \\
&=\varepsilon \int_\mathbb R\int_0^\infty u^{\varepsilon_n}\vp_{xx}~ dtdx+\int_\mathbb R\int_0^\infty u^{\varepsilon_n} {\cal L}\vp ~dtdx+\frac{1}{2}\int_\mathbb R\int_0^\infty (u^{\varepsilon_n})^2\vp_x~dtdx.
\end{split}
\end{equation}
Thus, since $u^{\varepsilon_n}\to u$ {\it a.e.} as $\varepsilon_n\to 0,$ the sequence $\{u^{\varepsilon_n}\}$ is bounded in $L^\infty$-norm 
by $\Vert u_0\Vert_\infty,$ and ${\cal L}\vp$
is integrable, the Lebesgue dominated convergence theorem allows us to pass to the
limit in equality (\ref{ue_slabe_2}). This completes the proof of Theorem \ref{zALS}.
\endProof

\vskip 1 cm
Now, we are in a position to prove Theorem \ref{th main}.

\bigskip
\noindent
{\it Proof of Theorem \ref{th main}.}
Denote by $u^{\varepsilon_n}$ the solution of regularized problem (\ref{rownanie zregularyzowane})-(\ref{warunek ue}) and by $u$ the weak solution 
of problem (\ref{rownanie})-(\ref{warunek}).
By Theorem \ref{zALS}, we know that $u^{\varepsilon_n}\to u$ {\it a.e.} on $\mathbb R\times (0,\infty)$ for a sequence $\varepsilon_n\to 0.$
Therefore, by the Fatou lemma and Theorem~\ref{jeszcze nie wiem jaki label}, we have for each 
$R>0$ and $p\in[1,\infty]$ and for all $t>0$ the following estimate 
\begin{equation*}
\Vert u(t)-w^R(t)\Vert_{L^p(-R,R)}\leqslant \liminf_{\varepsilon_n\to 0} \Vert u^{\varepsilon_n}(t)-w^R(t)\Vert_{L^p(-R,R)}\leqslant C t^{-(1-1/p)/2}[\log(2+t)]^{(1+1/p)/2}.
\end{equation*}
Since $R > 0$ is arbitrary and the right-hand side of this inequality does not depend
on $R$, we complete the proof of the inequality (\ref{glowna nierownosc}) by letting $R\to \infty.$

Since solution of the regularized problem satisfy the $L^1$-contraction property stated in Theorem \ref{wlasnosci rozwiazania}, by an analogous 
passage to the limit $\varepsilon_n\to 0$ as described above, we obtain $L^1$- contraction inequality for weak solutions to the nonlocal 
problem (\ref{rownanie})-(\ref{warunek}). Hence a weak solution to (\ref{rownanie})-(\ref{warunek}) is unique.
\endProof

%%%%%%%%%%%%%%%%%%%%%%%%%%%%%%%%%%%%%%%%%%%%%%%%%%%%%%%%%%%%%%%%%%%%%%
%%%%
%%%%%%%%%%%%%%%%%%%%%%%%%%%%%%%%%%%%%%%%%%%%%%%%%%%%%%%%%%%%%%%%%%%%%%
%%%%

\bigskip

{\bf Acknowlegements.} This work was supported by the MNiSW grant No. IdP2011/000661.

%%%%%%%%%%%%%%%%%%%%%%%%%%%%%%%%%%%%%%%%%%%%%%%%%%%%%%

\end{document}